\theoremstyle{definition}
\newtheorem{theorem}{Theorem}[section]
\newtheorem{prop}[theorem]{Proposition}
\newtheorem{defn}[theorem]{Definition}
\newtheorem{cor}[theorem]{Corollary}
\newtheorem{eg}[theorem]{Example}
\newtheorem{lemma}[theorem]{Lemma}
\newtheorem{remark}[theorem]{Remark}
\numberwithin{equation}{section}
\DeclareMathOperator{\rank}{rank}
\DeclareMathOperator{\id}{id}
\DeclareMathOperator{\Ab}{Ab}
\newcommand{\Z}{\mathbb{Z}}
\begin{document}

\title{Weighted Fundamental Group}

\author{Chengyuan Wu\textsuperscript{*}}
\address{Department of Mathematics, National University of Singapore, Singapore 119076, Singapore}
\email{wuchengyuan@u.nus.edu}
\thanks{\text{*}First authors. The project was supported in part by the Singapore Ministry of Education research grant (AcRF Tier 1 WBS No.~R-146-000-222-112). The first author was supported in part by the President's Graduate Fellowship of National University of Singapore. The second author was supported by the Postdoctoral International Exchange Program of China 2019 project from The Office of China Postdoctoral Council, China Postdoctoral Science Foundation. The third author was supported by Natural Science Foundation of China (NSFC grant no.\ 11971144) and High-level Scientific Research Foundation of Hebei Province. The fourth author was supported by Nanyang Technological University Startup Grants M4081842, Singapore Ministry of Education Academic Research Fund Tier 1 RG31/18, Tier 2 MOE2018-T2-1-033.}

\author{Shiquan Ren\textsuperscript{*}}
\address{Yau Mathematical Sciences Center, Tsinghua University, Beijing 100084, China}
\email{srenmath@tsinghua.edu.cn}
%    \thanks will become a 1st page footnote.
%\thanks{The first author was supported in part by NSF Grant \#000000.}

\author{Jie Wu\textsuperscript{*}}
\address{School of Mathematical Sciences, Hebei Normal University, Hebei 050024, China}
\email{wujie@hebtu.edu.cn}

\author{Kelin Xia\textsuperscript{*}}
\address{(a) Division of Mathematical Sciences, School of Physical and Mathematical Sciences, Nanyang Technological
University, Singapore 637371, Singapore \\(b) School of Biological Sciences, Nanyang Technological University, Singapore 637371, Singapore}
\email{xiakelin@ntu.edu.sg}

%\thanks{Support information for the second author.}

%    General info
\subjclass[2010]{Primary 55Q05, 55M99; Secondary 55U10}

%\date{January 1, 2001 and, in revised form, June 22, 2001.}

%\dedicatory{}

\keywords{Algebraic topology, Weighted Fundamental Group}

\begin{abstract}
In this paper, we develop and study the theory of weighted fundamental groups of weighted simplicial complexes. When all weights are 1, the weighted fundamental group reduces to the usual fundamental group as a special case. We also study weighted versions of classical theorems like van Kampen's theorem. In addition, we also investigate the abelianization, lower central series and applications of weighted fundamental groups.
\end{abstract}

\maketitle
%%%%%Main Text Start
\section{Introduction}
Weighted structures, such as weighted graphs, are common in mathematics. The addition of weights to a mathematical object often adds new information to the object. Other than weighted graphs, weights have also been considered on hypergraphs \cite{ren2018cohomology,ihler1992modeling,lee2002algorithms} and simplicial complexes \cite{Dawson1990,ren2018weighted,ren2017further,wu2018weighted}.

The fundamental group is an important topological invariant. In this paper, our goal is to study the weighted fundamental group of a weighted simplicial complex. Intuitively, the weighted fundamental group should contain information about the weights of the simplicial complex. In addition, the weighted fundamental group should reduce to the usual fundamental group as a special case. Ideally, the weighted fundamental group should also satisfy (weighted versions of) classical theorems like van Kampen's theorem. We show that our definition of the weighted fundamental group fulfills the above requirements. Our approach is to modify the description of fundamental groups using maximal trees, by introducing weights on edges (1-simplices).

To our knowledge, there is no existing literature on weighted fundamental groups of weighted simplicial complexes. In \cite{bogley2000weighted}, a weighted combinatorial group theory is defined, however this is in another context of omega-groups \cite{sieradskiomega} and wild spaces, which the authors define to be metric spaces with arbitrarily small essential features. Weighted (co)homology of simplicial complexes has been previously studied in \cite{Dawson1990,ren2018weighted,ren2017further,wu2018weighted}.

In contrast with the classical case, our weighted fundamental group depends on the choice of maximal tree in general. This may have potential applications in situations where each maximal tree needs to be distinguished. In the special case when all weights are 1, the weighted fundamental group is independent of choice of maximal tree (Corollary \ref{cor:indepvert}). In addition, in the case of weighted graphs, the weighted fundamental group is independent of choice of maximal tree when all weights are equal (Corollary \ref{cor:indeptreegraph}). In Section \ref{sec:weightedgraph}, we study weighted graphs, which are 1-dimensional weighted simplicial complexes. We state and prove the weighted van Kampen Theorem in Section \ref{sec:vankampen}. In Section \ref{sec:lowercs}, we study the lower central series for certain cases of weighted fundamental groups. Finally, in Sections \ref{sec:track} and \ref{sec:chem} we outline some possible applications of weighted fundamental groups.

\section{Weighted Fundamental Group}
In this paper, we let $K$ be a path-connected (abstract) simplicial complex, and let $A$ be a fixed maximal tree in $K$. We let the set of vertices of $K$ be totally ordered. We write $v_0v_1\dots v_n$ to denote a simplex $[v_0,\dots,v_n]$ spanned by vertices $v_0,\dots,v_n$ in $K$. We remark that our definition is independent of vertex ordering, the vertex ordering is just to simplify the eventual presentation of the weighted fundamental group.

\begin{defn}[Weighted simplicial complex]
Let $w$ be a function from the 1-simplices of $K$ to $\Z$. We call $w$ a \emph{weight function}. For a 1-simplex $\sigma\in K$, we say that $w(\sigma)$ is the \emph{weight} of $\sigma$. We call the triple $(K,w,A)$ a \emph{weighted simplicial complex}, or WSC for short.
\end{defn}

We define a group $\pi_1(K,w,A)$ combinatorially as follows. We call $\pi_1(K,w,A)$ the \emph{weighted fundamental group} of $K$ induced by the weight function $w$ and the maximal tree $A$.

\begin{defn}
\label{defn:originalrelations}
The generators of $\pi_1(K,w,A)$ are given by the letters
\[g_{ab},\]
where $ab$ is a 1-simplex of $K$, and defining relations given by:
\begin{enumerate}
\item $g_{ab}^{w(ab)}=1$ if the 1-simplex $ab$ lies in $A$,
\item $g_{ab}^{w(ab)}=g_{av}^{w(av)}g_{vb}^{w(vb)}$ if $avb$ is a 2-simplex of $K$,
\item $g_{ba}=g_{ab}^{-1}$.
\end{enumerate}
\end{defn}

\begin{prop}
\label{prop:vertindep}
Definition \ref{defn:originalrelations} is well-defined and independent of the vertex ordering on $K$.
\end{prop}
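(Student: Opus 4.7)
The plan is to treat the two claims separately: that the presentation in Definition \ref{defn:originalrelations} is unambiguous as stated (well-definedness), and that the resulting group does not depend on which total order one places on the vertex set of $K$. The heart of the matter in both cases is a single computation: relation (2) a priori depends on which vertex of a 2-simplex is designated the ``middle'' vertex $v$, so one must verify that the six possible labelings of the three vertices as $(a,v,b)$ all produce the same relation modulo relation (3).

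For well-definedness I would first record that the weight $w$ is a function on unordered 1-simplices, so $w(ab) = w(ba)$; this makes relation (1) invariant under the swap $ab \leftrightarrow ba$, since $g_{ba}^{w(ba)} = (g_{ab}^{-1})^{w(ab)} = (g_{ab}^{w(ab)})^{-1}$ is trivial exactly when $g_{ab}^{w(ab)}$ is. The central step is a six-case enumeration for relation (2). Fix a 2-simplex on vertex set $\{u_1, u_2, u_3\}$ and set $h_{ij} := g_{u_i u_j}^{w(u_i u_j)}$ for $i \neq j$, so that relation (3) reads $h_{ji} = h_{ij}^{-1}$. For each permutation of $(u_1, u_2, u_3)$ substituted for $(a, v, b)$ in relation (2), I would rewrite every generator in terms of the $h_{ij}$ with $i<j$; each of the six relations then collapses to the same identity $h_{13} = h_{12}\, h_{23}$. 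Thus the relation attached to a 2-simplex does not depend on how its vertices are labeled.

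Independence of the vertex ordering on $K$ follows immediately. The generators $g_{ab}$ are indexed by ordered pairs of adjacent vertices, and relation (3) identifies $g_{ba}$ with $g_{ab}^{-1}$, so the underlying generating set is unchanged by a reordering of the vertices. Relations (1) and (2) depend on the ordering only through the labeling of each simplex, and the previous paragraph shows every labeling yields the same relation modulo relation (3). Consequently the normal closure of the relations, and hence the quotient group, is literally unchanged by any change of total order. The only real obstacle in the argument is the mechanical six-case check for relation (2); once it is recorded, both assertions of the proposition follow at once.
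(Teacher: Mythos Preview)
Your proposal is correct and follows essentially the same approach as the paper's proof: verify that each of relations (1), (2), (3) is invariant under relabeling of the vertices involved, using relation (3) as the tool to reduce every variant of (2) on a fixed 2-simplex to a single identity. The paper's proof merely asserts that ``all possible orderings of $avb$ \dots\ lead to the same relation (2),'' whereas you have spelled out the six-case verification explicitly via the substitution $h_{ij}=g_{u_iu_j}^{w(u_iu_j)}$; this is the same argument, just written out in more detail.
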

\begin{proof}
We note that $g_{ab}^{w(ab)}=1$ is equivalent to $g_{ba}^{w(ba)}=1$ due to relation (3). We can also check that for all possible orderings of $avb$ (i.e. $abv$, $bav$, etc.) lead to the same relation (2), again due to relation (3). Finally, $g_{ab}=g_{ba}^{-1}$ is equivalent to relation (3). We observe that any permutation of the vertex ordering leads to the same generators (after relabelling) satisfying the same relations.
\end{proof}

We can now remove some unnecessary generators. Since $g_{ba}=g_{ab}^{-1}$, we only need to introduce a generator $g_{ab}$ for each 1-simplex $ab$ of $K$ with $a<b$. Due to Proposition \ref{prop:vertindep}, we can restrict relation (2) to 2-simplices $avb$ of $K$ with $a<v<b$. This leads us to the following equivalent definition, which will also be independent of vertex ordering on $K$.

\begin{defn}
\label{defn:relations}
The generators of $\pi_1(K,w,A)$ are given by the letters
\[g_{ab},\]
where $ab$ is a 1-simplex of $K$ with $a<b$, and defining relations given by:
\begin{enumerate}
\item $g_{ab}^{w(ab)}=1$ if the 1-simplex $ab$ lies in $A$,
\item $g_{ab}^{w(ab)}=g_{av}^{w(av)}g_{vb}^{w(vb)}$ if $a<v<b$ and $avb$ is a 2-simplex of $K$.
\end{enumerate}
\end{defn}

\begin{remark}
We remark that in general, Definition \ref{defn:relations} depends on the choice of maximal tree $A$. However, in the special case where all weights are 1, the definition is independent of choice of maximal tree. This is a corollary of the following theorem.
\end{remark}

\begin{theorem}
\label{thm:allweights1}
When $w(\sigma)\equiv 1$ for all 1-simplices $\sigma\in K$, $\pi_1(K,w,A)$ is isomorphic to the usual fundamental group $\pi_1(K,v_0)$, where $v_0$ is a vertex of the path-connected simplicial complex $K$.
\end{theorem}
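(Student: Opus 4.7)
The plan is to identify the presentation in Definition~\ref{defn:relations} (with $w\equiv 1$) with the classical edge-path presentation of the simplicial fundamental group, which is standard (see e.g.\ Spanier, \emph{Algebraic Topology}). Recall that for a connected simplicial complex $K$ with maximal tree $A$ and basepoint $v_0\in A$, the group $\pi_1(K,v_0)$ admits a presentation with one generator $[a,b]$ per ordered $1$-simplex of $K$, and relations $[a,b][b,a]=1$, $[a,b][b,c]=[a,c]$ whenever $\{a,b,c\}$ spans a simplex of $K$, together with $[a,b]=1$ for every edge $ab$ lying in $A$.

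First I would construct a homomorphism $\phi\colon\pi_1(K,w,A)\to\pi_1(K,v_0)$ by sending each generator $g_{ab}$ (with $a<b$) to the homotopy class of the loop $\alpha_a\cdot e_{ab}\cdot \overline{\alpha_b}$, where $\alpha_v$ denotes the unique reduced edge-path in $A$ from $v_0$ to $v$, and $e_{ab}$ is the edge from $a$ to $b$. Relation~(1) of Definition~\ref{defn:relations} is satisfied because when $ab\in A$, the concatenation $\alpha_a\cdot e_{ab}$ is a path in the tree $A$ from $v_0$ to $b$, hence homotopic rel endpoints to $\alpha_b$. Relation~(2) holds since $e_{av}\cdot e_{vb}$ is homotopic rel endpoints to $e_{ab}$ across the 2-simplex $avb$, giving $\phi(g_{av})\phi(g_{vb})=[\alpha_a e_{av} e_{vb}\overline{\alpha_b}]=[\alpha_a e_{ab}\overline{\alpha_b}]=\phi(g_{ab})$.

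To produce a two-sided inverse, I would define $\psi\colon\pi_1(K,v_0)\to\pi_1(K,w,A)$ on edge-path generators $[a,b]$ by $[a,b]\mapsto g_{ab}$, with the conventions $g_{ba}:=g_{ab}^{-1}$ for $b>a$ and $g_{aa}:=1$. Invoking Proposition~\ref{prop:vertindep}, each classical edge-path relation $[a,b][b,c]=[a,c]$ attached to a $2$-simplex $abc$ is a consequence of Definition~\ref{defn:relations}~(2) combined with the inversion convention, while the maximal-tree relation $[a,b]=1$ for $ab\in A$ is precisely Definition~\ref{defn:relations}~(1). Thus $\psi$ is a well-defined homomorphism, and since $\phi$ and $\psi$ are manifestly mutual inverses on generators, they are inverse isomorphisms.

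The main obstacle is the symmetry bookkeeping required to deduce the classical edge-path relations in all possible orderings of a $2$-simplex from the single-ordering relation $g_{ab}=g_{av}g_{vb}$ with $a<v<b$ in Definition~\ref{defn:relations}~(2). This is essentially the content of the permutation argument in Proposition~\ref{prop:vertindep}, so only a minor adaptation is needed; once verified, the identification with the classical edge-path group—and therefore with $\pi_1(K,v_0)$—follows immediately.
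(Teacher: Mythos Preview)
Your proposal is correct and follows the same approach as the paper: both identify the $w\equiv 1$ presentation of Definition~\ref{defn:relations} with the classical edge-path (maximal-tree) presentation of $\pi_1(K,v_0)$. The paper's proof is a one-line citation to Armstrong and Hilton--Wylie for this identification, whereas you have written out the explicit isomorphism $\phi$, $\psi$ and checked the relations; this is simply a more detailed version of the same argument.
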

\begin{proof}
When $w(\sigma)\equiv 1$ for all 1-simplices $\sigma\in K$, Definition \ref{defn:relations} reduces to a well-known combinatorial description of the usual fundamental group (cf.\ \cite[pp.~133--135]{armstrong2013basic}, \cite[p.~240]{hilton1960introduction}). 
\end{proof}

\begin{cor}
\label{cor:indepvert}
When $w(\sigma)\equiv 1$ for all 1-simplices $\sigma\in K$, Definition \ref{defn:relations} is independent of choice of maximal tree.
\end{cor}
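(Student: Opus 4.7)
The plan is to deduce the corollary as a direct consequence of Theorem \ref{thm:allweights1}. That theorem, read at face value, asserts an isomorphism $\pi_1(K,w,A)\cong \pi_1(K,v_0)$ whenever $w\equiv 1$, and its statement does not privilege any particular maximal tree: the combinatorial description on pp.~133--135 of \cite{armstrong2013basic} and p.~240 of \cite{hilton1960introduction} referenced in the proof is precisely the presentation obtained from \emph{any} maximal tree $A$ of $K$. So the theorem should be viewed as producing an isomorphism for each choice of $A$, and the corollary then follows by transitivity.

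Concretely, I would let $A$ and $A'$ be two maximal trees in $K$, apply Theorem \ref{thm:allweights1} twice to obtain isomorphisms
\[
\pi_1(K,w,A)\ \xrightarrow{\ \cong\ }\ \pi_1(K,v_0)\ \xleftarrow{\ \cong\ }\ \pi_1(K,w,A'),
\]
and compose them to conclude $\pi_1(K,w,A)\cong \pi_1(K,w,A')$. Since $v_0$ is an arbitrary vertex of the path-connected complex $K$ and the usual fundamental group $\pi_1(K,v_0)$ is already known to be independent (up to isomorphism) of basepoint and of the auxiliary maximal tree used in its combinatorial presentation, this yields the claim.

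There is essentially no obstacle: the only point requiring a sentence of care is making explicit that the isomorphism in Theorem \ref{thm:allweights1} is natural in the choice of maximal tree, in the sense that the proof works uniformly for any $A$. This is immediate from the cited combinatorial description of $\pi_1(K,v_0)$, since that description is itself parametrised by a choice of maximal tree and yields the same group up to isomorphism for every such choice. Once this is observed, the corollary is a one-line consequence.
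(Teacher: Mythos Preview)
Your proposal is correct and takes essentially the same approach as the paper: both deduce the corollary directly from Theorem~\ref{thm:allweights1}, using the fact that the classical combinatorial presentation of $\pi_1(K,v_0)$ is known to be independent of the choice of maximal tree (the paper cites \cite[p.~135]{armstrong2013basic} for this). Your version simply spells out the composition of isomorphisms $\pi_1(K,w,A)\cong\pi_1(K,v_0)\cong\pi_1(K,w,A')$ a bit more explicitly than the paper's one-line proof.
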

\begin{proof}
This is a consequence of Theorem \ref{thm:allweights1}, since it is known that the usual fundamental group is independent of choice of maximal tree $A$ (cf.\ \cite[p.~135]{armstrong2013basic}).
\end{proof}

\begin{remark}
It is known that if $K$ is path-connected, the group $\pi_1(K,v_0)$ is, up to isomorphism, independent of the choice of basepoint $v_0$ (cf.\ \cite[p.~28]{Hatcher2002}). Hence, in this case we may use the abbreviated notation $\pi_1(K)$ to denote $\pi_1(K,v_0$).
\end{remark}

We show some examples where $\pi_1(K,w,A)$ is different from the usual fundamental group $\pi_1(K)$. We write $w_{ij}$ for $w(v_iv_j)$ and $g_{ij}$ for $g_{v_iv_j}$. We write $\Z/n$ for the cyclic group $\Z/n\Z$. We use the notation $\prod^*_{i\in I}G_i$ to denote the free product of a family of groups $\{G_i\}_{i\in I}$.

\begin{eg}
\label{eg:circleK}
Let $K$ be the simplicial complex (homotopy equivalent to the circle $S^1$) shown in Figure \ref{fig:circle}, where the maximal tree $A$ is marked in bold.
\begin{figure}[htbp]
\begin{center}
\begin{tikzpicture}
\filldraw 
(0,0) circle (2pt) node[align=left,below] {$v_0$}
(2,0) circle (2pt) node[align=left,below] {$v_1$}  
(1,1.7) circle (2pt) node[align=left,above] {$v_2$};
\draw[line width=2pt] (0,0)--(2,0);
\draw[line width=2pt] (2,0)--(1,1.7);
\draw (1,1.7)--(0,0);
\end{tikzpicture}
\caption{The simplicial complex $K$ with maximal tree $A$ (marked in bold).}
\label{fig:circle}
\end{center}
\end{figure}
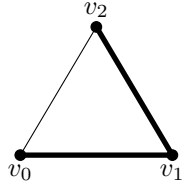

By definition, 
\begin{equation*}
\begin{split}
\pi_1(K,w,A)&=\langle g_{01},g_{02},g_{12}\mid g_{01}^{w_{01}}=1, g_{12}^{w_{12}}=1\rangle\\
&\cong\Z*(\Z/w_{01})*(\Z/w_{12}).
\end{split}
\end{equation*}

In particular, if $w_{01}\neq\pm 1$ or $w_{12}\neq\pm 1$, then $\pi_1(K,w,A)\not\cong\pi_1(K)=\Z$.
\end{eg}

\begin{remark}
\label{remark:choicemaxtree}
By observing Example \ref{eg:circleK}, we can see that the weighted fundamental group depends on the choice of maximal tree. For instance, if $B=\{[v_0,v_1],[v_0,v_2],[v_0],[v_1],[v_2]\}$ is chosen as the maximal tree of $K$, then
\begin{equation*}
\pi_1(K,w,B)\cong\Z*(\Z/w_{01})*(\Z/w_{02})
\end{equation*}
instead.
\end{remark}

\begin{eg}
\label{eg:circleL}
Let $L$ be the 2-simplex shown in Figure \ref{fig:2simplex}, with maximal tree $A$ marked in bold.
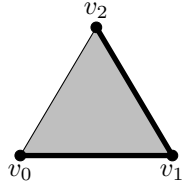
\begin{figure}[htbp]
\begin{center}
\begin{tikzpicture}
\fill[fill=lightgray]
(0,0)  
-- (2,0)
-- (1,1.7);
\filldraw 
(0,0) circle (2pt) node[align=left,below] {$v_0$}
(2,0) circle (2pt) node[align=left,below] {$v_1$}  
(1,1.7) circle (2pt) node[align=left,above] {$v_2$};
\draw[line width=2pt] (0,0)--(2,0);
\draw[line width=2pt] (2,0)--(1,1.7);
\draw (1,1.7)--(0,0);
\end{tikzpicture}
\caption{The simplicial complex $L$ with maximal tree $A$ (marked in bold).}
\label{fig:2simplex}
\end{center}
\end{figure}
\end{eg}

We have
\begin{equation}
\begin{split}
\pi_1(L,w,A)&=\langle g_{01},g_{02},g_{12}\mid g_{01}^{w_{01}}=1,g_{12}^{w_{12}}=1,g_{02}^{w_{02}}=g_{01}^{w_{01}}g_{12}^{w_{12}}=1\rangle\\
&\cong\Z/w_{01}*\Z/w_{02}*\Z/w_{12}.
\end{split}
\end{equation}

In particular, if any of $w_{01},w_{02},w_{12}$ is not equal to $\pm 1$, then $\pi_1(L,w,A)\not\cong\pi_1(L)=1$.

\begin{theorem}
\label{thm:exactly2formula}
Let $(K,w,A)$ be a weighted simplicial complex such that for each 2-simplex $avb\in K$, exactly 2 of the 1-simplices $ab$, $av$, $vb$ lie in the maximal tree $A$.

Then,
\begin{equation*}
\pi_1(K,w,A)\cong\prod^*_{ab\in A}\Z/w(ab)*\prod^*_{\substack{ab\in K\setminus A \\ \text{and $ab$ is a face of}\\ \text{some 2-simplex of $K$}}}\Z/w(ab)*\prod^*_{\substack{ab\in K\setminus A \\ \text{and $ab$ is not a face of}\\ \text{any 2-simplex of $K$}}}\Z.
\end{equation*}
\end{theorem}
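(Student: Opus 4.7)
The plan is to start from the presentation of $\pi_1(K,w,A)$ in Definition \ref{defn:relations} and show that, under the hypothesis on 2-simplices, every relation of type (2) collapses to a relation of the form $g_e^{w(e)}=1$ for a single edge $e$. Once this is established, the presentation exhibits $\pi_1(K,w,A)$ as a free product of cyclic groups, one factor per 1-simplex, and reading off the orders of these cyclic groups yields the desired formula.

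First, I would fix an arbitrary 2-simplex $avb$ with $a<v<b$ and split into three cases depending on which two of the edges $ab$, $av$, $vb$ lie in $A$. If $av,vb\in A$ and $ab\notin A$, then relation (1) makes $g_{av}^{w(av)}=g_{vb}^{w(vb)}=1$, so relation (2) becomes $g_{ab}^{w(ab)}=1$. If $ab,av\in A$ and $vb\notin A$, then relation (1) makes the left side and the first factor on the right side trivial, so (2) becomes $g_{vb}^{w(vb)}=1$. The case $ab,vb\in A$ with $av\notin A$ is symmetric, yielding $g_{av}^{w(av)}=1$. In every case the relation (2) attached to $avb$ reduces, after incorporating the two applicable instances of (1), to the single relation $g_e^{w(e)}=1$, where $e$ is the unique edge of $avb$ not lying in $A$.

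Collecting these reductions over all 2-simplices, the presentation of $\pi_1(K,w,A)$ becomes a free group on the generators $\{g_{ab}\}_{ab\text{ a 1-simplex},\ a<b\}}$ modulo the relations $g_{ab}^{w(ab)}=1$ for every $ab\in A$, together with $g_{ab}^{w(ab)}=1$ for every $ab\in K\setminus A$ that is a face of some 2-simplex; the generators $g_{ab}$ for which $ab\in K\setminus A$ is not a face of any 2-simplex appear in no relation at all. Since this presentation partitions the generators into disjoint sets with relations involving only a single generator per set, it is the free product presentation of $\prod^*_{ab\in A}\Z/w(ab)$ with $\prod^*\Z/w(ab)$ over the second class of edges, with $\prod^*\Z$ over the third class, giving the stated isomorphism.

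The only delicate point is to confirm that no further identifications arise: two different 2-simplices could in principle impose two different relations on the same free generator $g_e$, but both would have the form $g_e^{w(e)}=1$, so they collapse to a single relation and the free product decomposition is unaffected. I therefore expect the main care, rather than any real obstacle, to lie in keeping the case analysis organized and verifying that the hypothesis is used exactly to rule out the offending ``zero or one edges in $A$'' configurations of a 2-simplex, which would otherwise mix generators nontrivially and break the free product structure.
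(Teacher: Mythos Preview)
Your proposal is correct and follows essentially the same approach as the paper: both arguments use the hypothesis to reduce each type-(2) relation to a single power relation $g_e^{w(e)}=1$ on the unique edge $e$ of the 2-simplex lying outside $A$, and then read off the free product of cyclic groups. Your three-case analysis on which edge lies outside $A$ is slightly more explicit than the paper's version (which organizes the argument by generator rather than by 2-simplex), and your remark about multiple 2-simplices sharing the same non-tree edge yielding the same relation is a point the paper leaves implicit.
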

\begin{proof}
Each generator $g_{ab}$ of $\pi_1(K,w,A)$ falls into 1 of the following 3 cases:
\begin{enumerate}
\item If $ab\in A$, then $g_{ab}$ satisfies the relation $g_{ab}^{w(ab)}=1$.
\item If $ab\in K\setminus A$, and $ab$ is a face of some 2-simplex $avb\in K$, then $g_{ab}$ satisfies the relation $g_{ab}^{w(ab)}=1$, since necessarily $av$ and $vb$ both lie in $A$ and hence $g_{ab}^{w(av)}=g_{vb}^{w(vb)}=1$.
\item If $ab\in K\setminus A$ and $ab$ is not a face of any 2-simplex of $K$, then $g_{ab}$ does not satisfy any relation.
\end{enumerate}

We observe that the generators $g_{ab}$ do not satisfy any other relation, other than the relations listed above.

Hence, 
\begin{equation*}
\begin{split}
\pi_1(K,w,A)&\cong\prod^*_{ab\in A}\langle g_{ab}\mid g_{ab}^{w(ab)}=1\rangle*\prod^*_{\substack{ab\in K\setminus A \\ \text{and $ab$ is a face of}\\ \text{some 2-simplex of $K$}}}\langle g_{ab}\mid g_{ab}^{w(ab)}=1\rangle*\\
&\prod^*_{\substack{ab\in K\setminus A \\ \text{and $ab$ is not a face of}\\ \text{any 2-simplex of $K$}}}\langle g_{ab}\rangle.
\end{split}
\end{equation*}
\end{proof}

\begin{cor}
\label{cor:exactly2}
Let $(K,w,A)$ be a weighted simplicial complex such that for each 2-simplex $avb\in K$, exactly 2 of the 1-simplices $ab$, $av$, $vb$ lie in the maximal tree $A$.

Let $w'$ be another weight function from the 1-simplices of $K$ to $\Z$ such that  for all 1-simplices $ab\in K$, either $w'(ab)=w(ab)$ or $w'(ab)=-w(ab)$.

Then, $\pi_1(K,w,A)\cong\pi_1(K,w',A)$.
\end{cor}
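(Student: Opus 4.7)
The plan is to deduce this directly from Theorem \ref{thm:exactly2formula}, which under the given hypothesis on the maximal tree gives an explicit free-product decomposition of $\pi_1(K,w,A)$ whose factors depend on $w$ only through the integers $w(ab)$, one per 1-simplex $ab$ of $K$. Applying the same theorem to $(K,w',A)$ yields an analogous decomposition in which the free product is indexed by the same three sets of 1-simplices (since these sets are determined by $K$ and $A$ alone), and the only difference is that each factor $\Z/w(ab)$ is replaced by $\Z/w'(ab)$, while each factor $\Z$ contributed by a 1-simplex not lying in any 2-simplex is unchanged.

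The substantive step is then to check that the corresponding factors are isomorphic. For each 1-simplex $ab$ we have $w'(ab)\in\{w(ab),-w(ab)\}$. Since $n\Z=(-n)\Z$ as subgroups of $\Z$ for every integer $n$, we have $\Z/w(ab)\Z=\Z/w'(ab)\Z$ on the nose; this handles both the case $w'(ab)=w(ab)$ and the case $w'(ab)=-w(ab)$, and it also covers the degenerate case $w(ab)=0$, in which both factors equal $\Z$.

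Finally, because the free product is functorial in its factors, these factor-wise isomorphisms assemble into an isomorphism of the two free products, yielding $\pi_1(K,w,A)\cong\pi_1(K,w',A)$. I do not anticipate any real obstacle here: the only thing to be careful about is matching the indexing sets of the free products on the two sides, which is automatic because the hypothesis on $A$ (exactly two of $ab,av,vb$ lie in $A$ for every 2-simplex $avb$) and the combinatorics of $K$ are shared by $w$ and $w'$.
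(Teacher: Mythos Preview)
Your proposal is correct and follows essentially the same route as the paper: both invoke Theorem~\ref{thm:exactly2formula} for $(K,w,A)$ and $(K,w',A)$ and then observe that $\Z/w(ab)$ and $\Z/(-w(ab))$ are identical. Your extra care in noting that the three indexing sets depend only on $K$ and $A$, and that the free product is functorial in its factors, makes explicit what the paper leaves implicit.
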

\begin{proof}
The proof follows from Theorem \ref{thm:exactly2formula} since the groups $\Z/w(ab)$ and $\Z/(-w(ab))$ are identical.
\end{proof}

\begin{cor}
\label{cor:pm1}
Let $(K,w,A)$ be a path-connected WSC such that for each 2-simplex $avb\in K$, exactly 2 of the 1-simplices $ab$, $av$, $vb$ lie in the maximal tree $A$. Suppose that either $w(\sigma)=1$ or $w(\sigma)=-1$ for all 1-simplices $\sigma\in K$.

Then, $\pi_1(K,w,A)$ is isomorphic to the usual fundamental group $\pi_1(K)$.
\end{cor}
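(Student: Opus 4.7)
The plan is to chain together the two results already proved in the excerpt, namely Corollary \ref{cor:exactly2} and Theorem \ref{thm:allweights1}. The point is that the hypothesis $w(\sigma)\in\{1,-1\}$ means $w$ differs from the constant-$1$ weight function only by signs on each $1$-simplex, which is exactly the situation to which Corollary \ref{cor:exactly2} applies.

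Concretely, I would define an auxiliary weight function $w'$ on the $1$-simplices of $K$ by $w'(\sigma)=1$ for every $1$-simplex $\sigma\in K$. For each $1$-simplex $\sigma$, since $w(\sigma)=\pm 1$, we have either $w'(\sigma)=1=w(\sigma)$ or $w'(\sigma)=1=-w(\sigma)$, so $w$ and $w'$ differ at worst by a sign on each $1$-simplex. Moreover, the hypothesis on the $2$-simplices of $(K,w,A)$ is a condition purely on the combinatorics of $K$ and $A$, so it is inherited by $(K,w',A)$ unchanged. Thus Corollary \ref{cor:exactly2} applies to the pair of weight functions $w$ and $w'$, giving
\[
\pi_1(K,w,A)\cong \pi_1(K,w',A).
\]

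Next, since $w'\equiv 1$, Theorem \ref{thm:allweights1} yields $\pi_1(K,w',A)\cong \pi_1(K,v_0)=\pi_1(K)$, and composing the two isomorphisms completes the argument. There is no real obstacle here: the content of the corollary is already packaged into Corollary \ref{cor:exactly2} (which absorbs the sign ambiguity) and Theorem \ref{thm:allweights1} (which handles the all-ones case), so the proof is a one-line application of both.
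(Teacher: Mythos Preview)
Your proof is correct and follows essentially the same approach as the paper's own proof: define the auxiliary constant-$1$ weight $w'$, apply Corollary~\ref{cor:exactly2} to pass from $w$ to $w'$, and then apply Theorem~\ref{thm:allweights1} to identify $\pi_1(K,w',A)$ with $\pi_1(K)$. The paper merely presents the two applications in the reverse order, but the argument is identical.
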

\begin{proof}
Let $(K,w',A)$ be the WSC such that $w'(\sigma)\equiv 1$ for all 1-simplices $\sigma\in K$. By Theorem \ref{thm:allweights1}, we have 
\begin{equation}
\label{eq:coriso1}
\pi_1(K,w',A)\cong\pi_1(K).
\end{equation}

Since $w'(\sigma)=w(\sigma)$ or $w'(\sigma)=-w(\sigma)$, by Corollary \ref{cor:exactly2} we have
\begin{equation}
\label{eq:coriso2}
\pi_1(K,w,A)\cong\pi_1(K,w',A).
\end{equation}

Combining both equations \eqref{eq:coriso1} and \eqref{eq:coriso2} concludes the proof.
\end{proof}

\begin{remark}
By Corollary \ref{cor:pm1}, we can conclude that when the weights in Examples \ref{eg:circleK} and \ref{eg:circleL} are all $\pm 1$, then the respective weighted fundamental groups are isomorphic to the usual fundamental groups.
\end{remark}

In Section \ref{sec:weightedgraph}, we show that the weighted fundamental group of a weighted graph has a simple structure, being the free product of its usual fundamental group and the cyclic groups associated with its maximal tree (Corollary \ref{cor:pi1weightedgraph}). In general, this is not the case. In fact the usual fundamental group may not appear as a free factor of the weighted fundamental group. We illustrate this in the following example.

\begin{eg}
\label{eg:specialK}
Let $K$ be the simplicial complex (homotopy equivalent to a wedge of 2 circles $S^1\vee S^1$) shown in Figure \ref{fig:wedge2circle}, where the maximal tree $A$ is marked in bold. Suppose all the weights $w_{ij}$ are equal to 2.

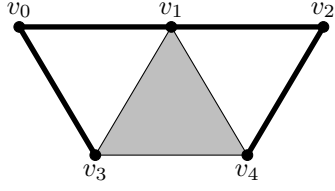
\begin{figure}[htbp]
\begin{center}
\begin{tikzpicture}
\fill[fill=lightgray]
(0,0)  
-- (2,0)
-- (1,1.7);
\filldraw 
(0,0) circle (2pt) node[align=center,below] {$v_3$}
(2,0) circle (2pt) node[align=center,below] {$v_4$}
(-1,1.7) circle (2pt) node[align=center,above] {$v_0$}  
(3,1.7) circle (2pt) node[align=center,above] {$v_2$}
(1,1.7) circle (2pt) node[align=center,above] {$v_1$};
\draw (0,0)--(2,0);
\draw (2,0)--(1,1.7);
\draw (1,1.7)--(0,0);
\draw[line width=2pt] (-1,1.7)--(1,1.7);
\draw[line width=2pt] (1,1.7)--(3,1.7);
\draw[line width=2pt] (0,0)--(-1,1.7);
\draw[line width=2pt] (2,0)--(3,1.7);
\end{tikzpicture}
\caption{The simplicial complex $K$ with maximal tree $A$ (marked in bold).}
\label{fig:wedge2circle}
\end{center}
\end{figure}

We can calculate that 
\begin{equation*}
\pi_1(K,w,A)\cong\Z/2*\Z/2*\Z/2*\Z/2*\langle a,b,c\mid a^2=b^2c^2\rangle.
\end{equation*}

We note that the usual fundamental group $\pi_1(K)\cong\Z*\Z$ does not appear as a free factor in $\pi_1(K,w,A)$.
\end{eg}

%When the conditions in Corollary 2.9 are not satisfied, the weighted fundamental group $\pi_1(K,w,A)$ may not be isomorphic to the usual fundamental group $\pi_1(K)$, even if either $w(\sigma)=1$ or $w(\sigma)=-1$ for all 1-simplices $\sigma\in K$. We illustrate this in the following example.

\section{Weighted Graphs}
\label{sec:weightedgraph}
Recall that in this paper, we let $K$ be a path-connected simplicial complex.
\begin{defn}
We define a \emph{graph} to be a 1-dimensional simplicial complex. A WSC $(K,w,A)$ is said to be a \emph{weighted graph} if $K$ is a graph.
\end{defn}

The following proposition is a weighted version of the classical result that the fundamental group of a graph is a free group (see \cite[p.~242]{hilton1960introduction}). 

\begin{prop}
\label{prop:productcyclic}
Let $(K,w,A)$ be a weighted graph. Then $\pi_1(K,w,A)$ is a free product of cyclic groups.

To be precise, we have 
\begin{equation*}
\pi_1(K,w,A)\cong\prod^*_{ab\in K\setminus A}\Z*\prod^*_{ab\in A}\Z/w(ab),
\end{equation*}
where $ab$ denotes a 1-simplex.
\end{prop}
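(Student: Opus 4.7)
The plan is to observe that, since $K$ is 1-dimensional, there are no 2-simplices in $K$, so relation (2) of Definition \ref{defn:relations} is vacuous. Consequently the only defining relations in $\pi_1(K,w,A)$ are the relations (1), namely $g_{ab}^{w(ab)}=1$ for each $ab\in A$. The generators $g_{ab}$ corresponding to edges $ab\in K\setminus A$ appear in no relation at all.

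Next, I would write down the presentation explicitly:
\[
\pi_1(K,w,A) = \Big\langle \{g_{ab} : ab \in K,\, a<b\} \;\Big|\; g_{ab}^{w(ab)}=1 \text{ for } ab\in A \Big\rangle.
\]
Since each relation involves only a single generator, and distinct relations involve distinct generators, the presentation decomposes naturally as a free product: each generator contributes one free factor, namely $\langle g_{ab}\mid g_{ab}^{w(ab)}=1\rangle \cong \Z/w(ab)$ when $ab\in A$, and $\langle g_{ab}\rangle \cong \Z$ when $ab\in K\setminus A$. This gives
\[
\pi_1(K,w,A) \cong \prod^*_{ab\in K\setminus A}\Z \;*\; \prod^*_{ab\in A}\Z/w(ab),
\]
as required.

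The only step that requires any care is the justification that the presentation splits as a free product; this is a standard fact about presentations whose relators each involve only one generator (equivalently, one may verify the universal property of the free product directly using the given presentation). There is no real obstacle here, since the 1-dimensionality of $K$ immediately eliminates the only relations that could link different generators together.
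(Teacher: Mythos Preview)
Your proof is correct and follows essentially the same approach as the paper. The paper's proof invokes Theorem~\ref{thm:exactly2formula} (whose hypothesis is vacuously satisfied since a graph has no 2-simplices), but the content of that theorem in this special case is exactly what you wrote out directly: with no 2-simplices, relation (2) disappears, each remaining relation involves a single generator, and the presentation splits as the stated free product.
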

\begin{proof}
The proof follows from Theorem \ref{thm:exactly2formula}. Since a graph does not have any 2-simplices, the condition of Theorem \ref{thm:exactly2formula} is trivially satisfied. Any 1-simplex $ab\in K\setminus A$ is not a face of any 2-simplex of $K$, since there are no 2-simplices in $K$.
\end{proof}

The following corollary shows that when all weights are equal, the weighted fundamental group of a weighted graph is independent of the choice of maximal tree.

\begin{cor}
\label{cor:indeptreegraph}
Let $(K,w,A)$ be a weighted graph. If $w(\sigma)\equiv a\in\Z$ for all 1-simplices $\sigma\in K$, then $\pi_1(K,w,A)\cong\pi_1(K,w,B)$ for any maximal tree $B$ in $K$.
\end{cor}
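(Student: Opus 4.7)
The plan is to apply Proposition \ref{prop:productcyclic} to both $\pi_1(K,w,A)$ and $\pi_1(K,w,B)$ and then argue that the two resulting free products have the same number of factors of each isomorphism type.

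First I would substitute $w(\sigma)\equiv a$ into the formula of Proposition \ref{prop:productcyclic}. This yields
\begin{equation*}
\pi_1(K,w,A)\cong\prod^*_{\sigma\in K\setminus A}\Z\;*\;\prod^*_{\sigma\in A}\Z/a,
\end{equation*}
and the analogous formula for $B$ in place of $A$. So the statement reduces to showing that the cardinalities of the index sets $A$ and $B$ (and therefore of $K\setminus A$ and $K\setminus B$, since $K$ is the same) coincide.

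Next I would invoke the classical fact that in a connected graph, any two spanning trees have the same number of edges; for a finite connected graph this number is $|V(K)|-1$, and the same statement holds for infinite connected graphs by a standard Zorn's lemma / cardinality argument. Consequently the $1$-simplices of $A$ and of $B$ are in bijection, and the complements $K\setminus A$ and $K\setminus B$ are also in bijection. Combining these bijections with the previous displayed formula gives the required isomorphism of free products.

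The only potentially subtle step is the cardinality matching of maximal trees in the infinite case, but since this is a standard combinatorial fact I would simply cite it (the authors are already working in the setting of abstract simplicial complexes where this is well known). The rest is a direct application of Proposition \ref{prop:productcyclic} together with the observation that the cyclic factor $\Z/a$ is the same for every edge, which is precisely where the hypothesis that all weights are equal enters.
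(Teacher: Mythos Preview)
Your proposal is correct and follows essentially the same approach as the paper: apply Proposition~\ref{prop:productcyclic} with the constant weight to reduce to counting edges inside and outside the maximal tree, then use that any two maximal trees of a connected graph have the same number of edges. The paper phrases this in the finite case (each maximal tree has $V-1$ edges, so both groups are a free product of $E-(V-1)$ copies of $\Z$ and $V-1$ copies of $\Z/a$), while you also note the infinite case, but the argument is the same.
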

\begin{proof}
Let $V$ be the number of vertices of $K$ and let $E$ be the number of 1-simplices (edges) of $K$. Then, each maximal tree has exactly $V-1$ edges. Hence, by Proposition \ref{prop:productcyclic}, $\pi_1(K,w,A)$ and $\pi_1(K,w,B)$ are both isomorphic to a free product of $E-(V-1)$ copies of $\Z$ and $V-1$ copies of $\Z/a$.
\end{proof}

As a corollary, we also have the following decomposition result that relates the weighted fundamental group $\pi_1(K,w,A)$ of a weighted graph to the usual fundamental group $\pi_1(K)$.

\begin{cor}
\label{cor:pi1weightedgraph}
Let $(K,w,A)$ be a weighted graph. Then
\begin{equation*}
\pi_1(K,w,A)\cong\pi_1(K)*\prod^*_{ab\in A}\Z/w(ab),
\end{equation*}
where $ab$ denotes a 1-simplex.
\end{cor}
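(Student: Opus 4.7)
The plan is to combine Proposition \ref{prop:productcyclic} with Theorem \ref{thm:allweights1} applied to the trivial weight function. By Proposition \ref{prop:productcyclic}, we already have the decomposition
\[
\pi_1(K,w,A) \cong \prod^*_{ab\in K\setminus A}\Z \,*\, \prod^*_{ab\in A}\Z/w(ab),
\]
so the only thing that remains is to identify the first free factor with $\pi_1(K)$.

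The key step is to apply Proposition \ref{prop:productcyclic} a second time, this time to the constant weight function $w'\equiv 1$ on the same graph $K$ with the same maximal tree $A$. On the one hand, Theorem \ref{thm:allweights1} gives $\pi_1(K,w',A)\cong\pi_1(K)$. On the other hand, Proposition \ref{prop:productcyclic} yields
\[
\pi_1(K,w',A) \cong \prod^*_{ab\in K\setminus A}\Z \,*\, \prod^*_{ab\in A}\Z/1 \cong \prod^*_{ab\in K\setminus A}\Z,
\]
since $\Z/1$ is the trivial group and contributes nothing to the free product. Combining these two identifications gives $\pi_1(K)\cong \prod^*_{ab\in K\setminus A}\Z$, after which substituting back into the formula from Proposition \ref{prop:productcyclic} delivers the stated isomorphism.

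There is no real obstacle here; the argument is essentially a two-line bookkeeping exercise once Proposition \ref{prop:productcyclic} and Theorem \ref{thm:allweights1} are in hand. The only mild subtlety is ensuring that the isomorphism $\pi_1(K,w',A)\cong\pi_1(K)$ from Theorem \ref{thm:allweights1} respects the natural free-factor corresponding to edges in $K\setminus A$, but this is immediate from the combinatorial description, since both isomorphisms are built from the same set of generators $g_{ab}$ indexed by 1-simplices of $K$.
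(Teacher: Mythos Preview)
Your proof is correct and follows essentially the same approach as the paper: both arguments invoke Proposition~\ref{prop:productcyclic} and then identify the free factor $\prod^*_{ab\in K\setminus A}\Z$ with $\pi_1(K)$. The only difference is that the paper cites directly the classical fact that the fundamental group of a graph is free on the edges outside a maximal tree, whereas you derive this fact internally by applying Proposition~\ref{prop:productcyclic} to the constant weight $w'\equiv 1$ and invoking Theorem~\ref{thm:allweights1}; this makes your version slightly more self-contained but is otherwise the same argument. Your closing remark about the isomorphism ``respecting the free-factor decomposition'' is unnecessary: only an abstract isomorphism $\pi_1(K)\cong\prod^*_{ab\in K\setminus A}\Z$ is needed, and that is exactly what you have established.
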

\begin{proof}
For a graph, the fundamental group $\pi_1(K)$ is a free group generated by the generators $g_{ab}$, one for each 1-simplex of $K\setminus A$. That is, $\pi_1(K)\cong\prod^*_{ab\in K\setminus A}\Z$. Hence, the result follows from Proposition \ref{prop:productcyclic}.
\end{proof}

\begin{prop}
Let $G$ be a free product of cyclic groups. There exists a weighted graph $(K,w,A)$ with $\pi_1(K,w,A)\cong G$.
\end{prop}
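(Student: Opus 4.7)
The plan is to invoke Proposition \ref{prop:productcyclic}, which realises $\pi_1(K,w,A)$ of any weighted graph as the free product of one copy of $\Z$ per non-tree $1$-simplex and one copy of $\Z/w(ab)$ per tree $1$-simplex $ab$. Writing the target group as
\[G\cong\prod^{*}_{j\in J}\Z/n_j\ *\ \prod^{*}_{i\in I}\Z\]
with $n_j\in\Z$ and $J,I$ possibly empty index sets, the task reduces to building a $1$-dimensional simplicial complex with a spanning tree so that there are $|I|$ non-tree edges and $|J|$ tree edges carrying the weights $n_j$, any additional tree edges being assigned weight $1$ so that they contribute only the trivial group $\Z/1$. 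If $G$ is trivial I let $K$ be a single vertex; otherwise $J\cup I\neq\emptyset$ and I proceed with the construction below.

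Let $K$ have vertex set $\{v_0\}\cup\{v_j:j\in J\}\cup\{u_i:i\in I\}\cup\{y\}$, where the extra vertex $y$ is omitted in the case $I=\emptyset$. Declare the $1$-simplices of $K$ to be
\[\{v_0v_j:j\in J\}\cup\{v_0u_i:i\in I\}\cup\{v_0y\}\cup\{u_iy:i\in I\}.\]
Let $A\subseteq K$ be the subcomplex consisting of every vertex together with the first three families of edges above; each vertex other than $v_0$ is joined to $v_0$ by a single edge of $A$, so $A$ is a spanning tree. Finally define $w(v_0v_j)=n_j$ for $j\in J$ and $w(\sigma)=1$ for every other $1$-simplex $\sigma$.

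The non-tree $1$-simplices are precisely $\{u_iy:i\in I\}$, so Proposition \ref{prop:productcyclic} yields
\[\pi_1(K,w,A)\cong\prod^{*}_{i\in I}\Z\ *\ \prod^{*}_{j\in J}\Z/n_j\ *\ \prod^{*}_{i\in I}\Z/1\ *\ \Z/1,\]
which collapses to $G$ after discarding the trivial $\Z/1$ factors. There is no substantive obstacle in this argument; the only step requiring care is verifying that the construction defines an honest simplicial complex, i.e.\ that no $1$-simplex appears twice. This is immediate because the vertex labels $v_0$, $\{v_j\}_{j\in J}$, $\{u_i\}_{i\in I}$, and $y$ are pairwise distinct, so every listed edge is an unordered pair of distinct vertices and no two listed edges share the same endpoint pair. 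Hence every free product of cyclic groups arises as the weighted fundamental group of some weighted graph.
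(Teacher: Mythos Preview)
Your argument is correct. The construction you give is a genuine weighted graph, the chosen $A$ is a spanning tree, and Proposition~\ref{prop:productcyclic} applies exactly as you claim; the trivial $\Z/1$ factors drop out and you recover $G$.

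The paper's proof takes a different and somewhat slicker route. Rather than separating the infinite cyclic factors from the finite ones, it writes every cyclic factor uniformly as $\Z/m_i$ with $m_i\in\Z$, using the convention $\Z/0\cong\Z$ for the infinite ones. It then takes $K$ to be a wedge of $1$-simplices (a star graph), sets $A=K$ so that \emph{every} edge is a tree edge, and assigns $w(L_i)=m_i$. The presentation of $\pi_1(K,w,A)$ is then read off directly from Definition~\ref{defn:relations} with no non-tree edges at all. Your approach instead realises each free $\Z$ factor geometrically via a non-tree edge $u_iy$, at the cost of introducing the auxiliary vertex $y$, extra weight-$1$ tree edges, and a separate treatment of the trivial case. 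Both arguments are valid; the paper's buys uniformity and brevity by exploiting $\Z\cong\Z/0$, while yours is perhaps more ``honest'' in that infinite cyclic factors arise from genuine loops rather than from zero-weighted tree edges.
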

\begin{proof}
Write 
\begin{equation*}
G\cong\prod^*_{i\in I}\langle g_i\mid g_i^{m_i}=1\rangle\cong\prod^*_{i\in I}\Z/m_i,
\end{equation*}
where $m_i\in\Z$ and $I$ is some index set.

For each generator $g_i$ of $G$, take a 1-simplex $L_i$ and let $K=\bigvee_{i\in I} L_i$ be the wedge sum of all such $L_i$. Choose $A=K$, where we observe that $A$ is a maximal tree of $K$. Let $w(L_i)=m_i$. Then $\pi_1(K,w,A)$ is generated by the letters $g_{L_i}$ with relations $g_{L_i}^{w(L_i)}=1$.

Hence,
\begin{equation*}
\begin{split}
\pi_1(K,w,A)&\cong\prod^*_{i\in I}\langle g_{L_i}\mid g_{L_i}^{w(L_i)}=1\rangle\\
&\cong\prod^*_{i\in I}\langle g_i\mid g_i^{m_i}=1\rangle\\
&\cong G.
\end{split}
\end{equation*}
\end{proof}

\section{Weighted van Kampen Theorem}
\label{sec:vankampen}
In this section, we generalize van Kampen's theorem for the case of weighted simplicial complexes. We recall that a weighted simplicial complex $(K,w,A)$ is a triple consisting of a path-connected simplicial complex $K$, a weight function $w$ and a maximal tree $A$ of $K$.

\begin{defn}[Weighted subcomplex]
Let $(K,w_K,A)$ and $(L,w_L,B)$ be weighted simplicial complexes such that:
\begin{enumerate}
\item $K\subseteq L$,
\item $w_K$ is equal to the restriction of $w_L$ to $K$, i.e.\ $w_K=w_L|_K$,
\item $A\subseteq B$, and
\item the ordering of vertices in $K$ is preserved in $L$, i.e.\ if $a<b$ in $K$, then $a<b$ in $L$. 
\end{enumerate}

We call $(K,w_K,A)$ a \emph{weighted subcomplex} of $(L,w_L,B)$.
\end{defn}

\begin{defn}
Let $(K,w_K,A)$ be a weighted subcomplex of $(L,w_L,B)$. Let $i: K\to L$ be the inclusion map $i(\sigma)=\sigma$. We call $i$ an inclusion map between the WSCs $(K,w_K,A)$ and $(L,w_L,B)$. 
\end{defn}

We first state an algebraic lemma that will be used subsequently.
\begin{lemma}
\label{lemma:genextend}
Let $G=\langle S\mid R\rangle$ be a group with generators $S=\{g_i\}$ and relations $R=\{r_j=1\}$. Let $H$ be another group, and let $\phi$ be a function that assigns to each $g_i$ a value $\phi(g_i)\in H$.

Then the function $\phi$ extends to a homomorphism $\widetilde{\phi}:G\to H$ iff for each relation $r_j=g_{i_1}^{k_1}\dots g_{i_m}^{k_m}=1$, where $k_1,\dots k_m=\pm 1$, we have that $\phi(g_{i_1})^{k_1}\dots\phi(g_{i_m})^{k_m}=1$.
\qed
\end{lemma}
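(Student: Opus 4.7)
The plan is to recognize this lemma as the standard universal property of a group presentation $G = \langle S \mid R\rangle \cong F(S)/N$, where $F(S)$ is the free group on the generating set $S$ and $N$ is the normal closure in $F(S)$ of the set of relators $\{r_j\}$. The two directions then split naturally.

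For the forward direction, I would assume $\widetilde\phi : G \to H$ is a homomorphism extending $\phi$. Since $\widetilde\phi$ is a homomorphism and $r_j = g_{i_1}^{k_1}\cdots g_{i_m}^{k_m} = 1$ holds in $G$, applying $\widetilde\phi$ to both sides yields $\phi(g_{i_1})^{k_1}\cdots\phi(g_{i_m})^{k_m} = 1$ in $H$. This is immediate.

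For the backward direction, I would proceed in two steps using the universal property of free groups. First, because $F(S)$ is the free group on $S$, the assignment $g_i \mapsto \phi(g_i)$ extends uniquely to a homomorphism $\Phi : F(S) \to H$. Second, I need to check that $\Phi$ descends to the quotient $G = F(S)/N$, i.e.\ that $N \subseteq \ker\Phi$. By hypothesis, each relator $r_j$ satisfies $\Phi(r_j) = \phi(g_{i_1})^{k_1}\cdots\phi(g_{i_m})^{k_m} = 1$, so every relator lies in $\ker\Phi$. Since $\ker\Phi$ is a normal subgroup of $F(S)$ containing all the $r_j$, it contains their normal closure $N$. Hence $\Phi$ factors through the quotient map $F(S) \to G$, yielding a homomorphism $\widetilde\phi : G \to H$ with $\widetilde\phi(g_i) = \phi(g_i)$ for each generator, as required.

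There is no real obstacle here; the only subtle point is the implicit assumption that the relations $r_j$ are given in the specified normal form as products of generators with exponents $\pm 1$, which is harmless since any word in $F(S)$ can be written this way. I would close by remarking that the extension $\widetilde\phi$ is unique, as it is determined by its values on the generating set.
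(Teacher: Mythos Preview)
Your proof is correct and follows essentially the same approach as the paper. The paper in fact omits the proof of this lemma (it is stated with a \qed), but the commented-out proof in the source proceeds exactly as you do: extend $\phi$ to a homomorphism $\psi$ on the free group $F(S)$ by the universal property of free groups, then observe that the condition on the relators is equivalent to $\{r_j\}\subseteq\ker\psi$, hence to $N\subseteq\ker\psi$, hence (by the universal property of quotients) to the existence of $\widetilde\phi:G\to H$ extending $\phi$.
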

\begin{comment}
\begin{proof}
Let $\langle g_i\rangle$ denote the free group with basis $\{g_i\}$. Let $N$ be the smallest normal subgroup of $\langle g_i\rangle$ containing $\{r_j\}$. Then, $G=\langle g_i\rangle/N$. Let $\pi:\langle g_i\rangle\to G$ be the quotient map. By the universal property of free groups, there exists a homomorphism $\psi: \langle g_i\rangle\to H$ such that $\phi=\psi\circ j$, where $j:\{g_i\}\to\langle g_i\rangle$ is the inclusion map.

Then we have the following equivalent statements:
\begin{align*}
&\phi(g_{i_1})^{k_1}\dots\phi(g_{i_m})^{k_m}=1\ \text{for each $r_j=g_{i_1}^{k_1}\dots g_{i_m}^{k_m}$}\\
&\iff\psi(r_j)=1\ \text{for all $j$}\\
&\iff\{r_j\}\subseteq\ker\psi\\
&\iff N\subseteq\ker\psi\\
&\iff\text{there exists a group homomorphism $\widetilde{\phi}:G\to H$ such that $\psi=\widetilde{\phi}\circ\pi$}\\
&\text{(universal property of quotient groups)}\\
&\iff\text{there exists a homomorphism $\widetilde{\phi}:G\to H$ that extends $\phi$.}
\end{align*}
\end{proof}
\end{comment}

\begin{prop}
Let $(K,w_K,A)$ be a weighted subcomplex of $(L,w_L,B)$. Let $i: K\to L$ be an inclusion map between the WSCs $(K,w_K,A)$ and $(L,w_L,B)$. Then the map $i$ induces a homomorphism
\begin{equation*}
i_*: \pi_1(K,w_K,A)\to\pi_1(L,w_L,B).
\end{equation*}

We call $i_*$ the \emph{induced homomorphism} of $i$.
\end{prop}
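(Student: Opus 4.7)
The plan is to invoke Lemma \ref{lemma:genextend} after defining $i_*$ on generators and verifying it respects every defining relation of $\pi_1(K,w_K,A)$. Concretely, I would define a set map from generators to $\pi_1(L,w_L,B)$ by
\[\phi(g_{ab}) = g_{ab}\in\pi_1(L,w_L,B)\]
for each $1$-simplex $ab\in K$ with $a<b$. This is well-posed: since $K\subseteq L$, the simplex $ab$ lies in $L$, and since the ordering on $K$ is preserved in $L$ (condition (4) of weighted subcomplex), we still have $a<b$ in $L$, so $g_{ab}$ is a legitimate generator of $\pi_1(L,w_L,B)$ in the sense of Definition \ref{defn:relations}.

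Next I would run through the two families of defining relations for $\pi_1(K,w_K,A)$ and show that each is mapped to a relation that already holds in $\pi_1(L,w_L,B)$. For a relation of type (1), suppose $ab\in A$, so that $g_{ab}^{w_K(ab)}=1$ in $\pi_1(K,w_K,A)$. Since $A\subseteq B$ we have $ab\in B$, and since $w_K=w_L|_K$ we have $w_K(ab)=w_L(ab)$; hence $\phi(g_{ab})^{w_K(ab)}=g_{ab}^{w_L(ab)}=1$ in $\pi_1(L,w_L,B)$. For a relation of type (2), suppose $avb$ is a $2$-simplex of $K$ with $a<v<b$. Then $avb$ is also a $2$-simplex of $L$, the ordering $a<v<b$ carries over, and all three weights agree under $w_K$ and $w_L$, so the relation $\phi(g_{ab})^{w_K(ab)}=\phi(g_{av})^{w_K(av)}\phi(g_{vb})^{w_K(vb)}$ is exactly relation (2) in $\pi_1(L,w_L,B)$.

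The only mild technicality is that Lemma \ref{lemma:genextend} is stated for relations whose exponents are $\pm1$, whereas our relations feature arbitrary integer exponents $w(\sigma)$. I would handle this by rewriting each offending relation $g_{ab}^{w(ab)}$ as a product of $|w(ab)|$ factors of $g_{ab}$ or $g_{ab}^{-1}$ (according to the sign of $w(ab)$), which brings everything into the form required by the lemma; alternatively one may observe that the proof of Lemma \ref{lemma:genextend} is insensitive to the size of the exponents and adapts verbatim. Either way, Lemma \ref{lemma:genextend} then produces the sought group homomorphism $i_*:\pi_1(K,w_K,A)\to\pi_1(L,w_L,B)$ extending $\phi$. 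I do not expect any genuine obstacle here; the whole statement is essentially a bookkeeping check that the four compatibility conditions defining a weighted subcomplex are precisely what is needed to carry each generator and each relation of $\pi_1(K,w_K,A)$ into $\pi_1(L,w_L,B)$.
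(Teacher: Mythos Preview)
Your proposal is correct and follows essentially the same route as the paper: define $\phi$ on generators by $g_{ab}\mapsto g_{ab}$ (the paper writes the target generators as $h_{ab}$ to avoid notational overload), verify the two relation types using $A\subseteq B$, $w_K=w_L|_K$, $K\subseteq L$, and preservation of ordering, then invoke Lemma~\ref{lemma:genextend}. Your extra paragraph about the $\pm1$ exponents in Lemma~\ref{lemma:genextend} is a fair point that the paper silently glosses over; your fix (expand $g_{ab}^{w(ab)}$ into a word with $\pm1$ exponents) is the standard one and causes no trouble.
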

\begin{proof}
Let $\{g_{ab}\mid ab\in K\}$ be the generators of $\pi_1(K,w_K,A)$ with defining relations given in Definition \ref{defn:relations}. Similarly, let $\{h_{cd}\mid cd\in L^1\}$ be the generators of $\pi_1(L,w_L,B)$ with similar defining relations as given in Definition \ref{defn:relations}.

We define a map $\phi$ from the generators of $\pi_1(K,w_K,A)$ to that of $\pi_1(L,w_L,B)$ by 
\begin{equation*}
\phi(g_{ab})=h_{ab}.
\end{equation*}

Suppose $ab\in A$, then 
\begin{equation*}
\phi(g_{ab})^{w(ab)}=h_{ab}^{w(ab)}=1
\end{equation*}
since $ab\in A\subseteq B$.

Suppose $a<v<b$ and $avb$ is a 2-simplex of $K\subseteq L$, then
\begin{equation*}
\begin{split}
\phi(g_{ab})^{w(ab)}&=h_{ab}^{w(ab)}\\
&=h_{av}^{w(av)}h_{vb}^{w(vb)}\qquad\text{(since $a<v<b$ in $L$ and $avb$ is also a 2-simplex in $L$)}\\
&=\phi(g_{av})^{w(av)}\phi(g_{vb})^{w(vb)}.
\end{split}
\end{equation*}

Hence by Lemma \ref{lemma:genextend}, $\phi$ extends to a homomorphism
\begin{equation*}
i_*: \pi_1(K,w_K,A)\to\pi_1(L,w_L,B).
\end{equation*}
\end{proof}

\begin{lemma}
\label{lemma:subtrees}
Let $(L,w_L,B)$ be a path-connected WSC. Let $(K_0,w_{K_0}, A_0)$, $(K_1,w_{K_1},A_1)$, $(K_2, w_{K_2}, A_2)$ be path-connected weighted subcomplexes of $(L,w_L,B)$ such that:
\begin{enumerate}
\item $K_1\cup K_2=L$,
\item $K_1\cap K_2=K_0$, and
\item $(K_0,w_{K_0},A_0)$ is a weighted subcomplex of both $(K_1,w_{K_1},A_1)$ and $(K_2,w_{K_2},A_2)$.
\end{enumerate}

Then,
\begin{equation}
\label{eq:uniontree}
B=A_1\cup A_2
\end{equation} 
and
\begin{equation}
\label{eq:intersecttree}
A_1\cap A_2=A_0.
\end{equation}
\end{lemma}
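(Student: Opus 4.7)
The plan is to split the lemma into its two assertions and handle the intersection equation first, then use it to handle the union equation. The key facts to leverage are: (i) a maximal tree in a path-connected simplicial complex is a spanning tree, i.e.\ it contains every vertex; (ii) a subcomplex of a tree is a forest; (iii) a forest on $n$ vertices has at most $n-1$ edges, with equality precisely when it is a spanning tree on those vertices; and (iv) a connected forest is a tree.

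For \eqref{eq:intersecttree}, I would first observe that $A_1\cap A_2\subseteq K_1\cap K_2=K_0$, while $A_0\subseteq A_1\cap A_2$ follows from the hypothesis that $(K_0,w_{K_0},A_0)$ is a weighted subcomplex of each $(K_i,w_{K_i},A_i)$. Since each $A_i$ is a spanning tree of $K_i$, the vertex set of $A_1\cap A_2$ contains $V(A_0)=V(K_0)$ and is contained in $V(K_1)\cap V(K_2)=V(K_0)$, so equals $V(K_0)$. Being a subcomplex of the tree $A_1$, $A_1\cap A_2$ is a forest on $V(K_0)$ and thus has at most $|V(K_0)|-1$ edges; but it already contains the spanning tree $A_0$, which has exactly that many edges. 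Equality of edge counts combined with the containment $A_0\subseteq A_1\cap A_2$ forces $A_1\cap A_2=A_0$.

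For \eqref{eq:uniontree}, I would use that $A_1\cup A_2\subseteq B$ (since both $A_1,A_2\subseteq B$), so $A_1\cup A_2$ is a subcomplex of the tree $B$ and therefore a forest. By the path-connectedness of $K_0$ its common part $A_0=A_1\cap A_2$ is nonempty, so $A_1\cup A_2$ is connected, and a connected forest is a tree. Its vertex set equals $V(K_1)\cup V(K_2)=V(L)$, so $A_1\cup A_2$ is a spanning tree of $L$ contained in the spanning tree $B$; since adding any edge of $L$ outside a spanning tree would create a cycle, the two spanning trees must coincide, giving $A_1\cup A_2=B$. The only mildly delicate step is the bookkeeping that pins $V(A_1\cap A_2)$ to $V(K_0)$ so that the forest-counting argument applies on the nose; once this is in place, everything is essentially forced by the subcomplex containments and the spanning property of maximal trees.
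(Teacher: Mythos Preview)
Your argument is correct and reaches the same conclusion as the paper, but the route is somewhat different. The paper proves \eqref{eq:uniontree} first and \eqref{eq:intersecttree} second, and in both cases argues edge-by-edge: for an edge $\tau\in B$ lying in $K_1$, if $\tau\notin A_1$ then $\{\tau\}\cup A_1\subseteq B$ would contain a cycle, contradicting that $B$ is a tree; likewise, an edge of $A_1\cap A_2\subseteq K_0$ not in $A_0$ would force a cycle in $\{\tau\}\cup A_0\subseteq A_1$. Your version instead packages these facts as global counting statements about forests and spanning trees: $A_1\cap A_2$ is a forest on $V(K_0)$ containing the spanning tree $A_0$, hence equal to it; $A_1\cup A_2$ is a connected forest on $V(L)$ contained in the spanning tree $B$, hence equal to it. This is clean and makes good use of the fact that $A_1\cap A_2\supseteq A_0\neq\emptyset$ forces $A_1\cup A_2$ to be connected, a step the paper does not isolate.

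One caveat: your intersection argument invokes the inequality ``a forest on $n$ vertices has at most $n-1$ edges'', which tacitly assumes $|V(K_0)|<\infty$. The paper's direct cycle argument has no such restriction. If you want your proof to cover the general case, replace the counting step by the observation that any edge of $(A_1\cap A_2)\setminus A_0$ would, together with $A_0$, create a cycle inside $A_1$ --- this is exactly the maximality-of-$A_0$ statement you already used implicitly, and it is what the paper does. Your union argument is already fine in this respect, since ``a spanning tree contained in a spanning tree must coincide'' follows from the same cycle reasoning without any finiteness.
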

\begin{proof}
We first prove \eqref{eq:uniontree}. By definition of weighted subcomplex, we have $A_1\subseteq B$ and $A_2\subseteq B$. Hence, $A_1\cup A_2\subseteq B$. Let $\sigma$ be a 0-simplex in $B\subseteq L$. Since $K_1\cup K_2=L$, this implies that $\sigma\in K_1$ or $\sigma\in K_2$. Since $A_1$ is a maximal tree of $K_1$, $A_1$ contains all vertices of $K_1$. Similarly, $A_2$ contains all vertices of $K_2$. Hence, $\sigma\in A_1\cup A_2$.

Let $\tau$ be a 1-simplex in $B\subseteq L$. Similarly, since $K_1\cup K_2=L$, this implies that $\tau\in K_1$ or $\tau\in K_2$. Suppose $\tau\in K_1$. Then, we claim that $\tau\in A_1$. Otherwise $\tau\notin A_1$ implies that $\{\tau\}\cup A_1\subseteq B$ contains a cycle which contradicts the fact that $B$ is a tree. (We have used the property that adding one edge in $K_1\setminus A_1$ to the maximal tree $A_1$ will create a cycle.) Similarly, if $\tau\in K_2$ then necessarily $\tau\in A_2$. Hence, $\tau\in A_1\cup A_2$. We have shown that $B\subseteq A_1\cup A_2$. Therefore, $B=A_1\cup A_2$.

To prove \eqref{eq:intersecttree}, note that by definition of weighted subcomplex, we have $A_0\subseteq A_1$ and $A_0\subseteq A_2$. Thus, $A_0\subseteq A_1\cap A_2$. Let $\sigma$ be a 0-simplex in $A_1\cap A_2\subseteq K_1\cap K_2=K_0$. Since the maximal tree $A_0$ contains all vertices of $K_0$, hence $\sigma\in A_0$.

Let $\tau$ be a 1-simplex in $A_1\cap A_2\subseteq K_1\cap K_2=K_0$. Suppose to the contrary $\tau\notin A_0$. Then $\{\tau\}\cup A_0\subseteq A_1$ contains a cycle which contradicts the fact that $A_1$ is a tree. Hence, $\tau\in A_0$. We have shown that $A_1\cap A_2\subseteq A_0$. This completes the proof of \eqref{eq:intersecttree}.
\end{proof}

We now prove the main theorem of this section, the Weighted van Kampen Theorem. It is the weighted version of van Kampen's theorem for simplicial complexes (cf.\ \cite[p.~243]{hilton1960introduction}).

\begin{theorem}[Weighted van Kampen Theorem]
\label{thm:vankampen}
Let $(L,w_L,B)$ be a path-connected WSC. Let $(K_0,w_{K_0}, A_0)$, $(K_1,w_{K_1},A_1)$, $(K_2, w_{K_2}, A_2)$ be path-connected weighted subcomplexes of $(L,w_L,B)$ such that:
\begin{enumerate}
\item $K_1\cup K_2=L$,
\item $K_1\cap K_2=K_0$, and
\item $(K_0,w_{K_0},A_0)$ is a weighted subcomplex of both $(K_1,w_{K_1},A_1)$ and $(K_2,w_{K_2},A_2)$.
\end{enumerate}

Let $i_1: K_0\to K_1$, $i_2: K_0\to K_2$ be the inclusion maps between $(K_0,w_{K_0},A_0)$ and the WSCs $(K_1,w_{K_1},A_1)$ and $(K_2,w_{K_2},A_2)$ respectively.

Then,
\begin{equation*}
\pi_1(L,w_L,B)=\pi_1(K_1,w_{K_1},A_1)\coprod_{\pi_1(K_0,w_{K_0},A_0)}\pi_1(K_2,w_{K_2},A_2),
\end{equation*}
the free product with amalgamation of $\pi_1(K_1,w_{K_1},A_1)$ and $\pi_1(K_2,w_{K_2},A_2)$ with respect to the induced homomorphisms
\begin{align*}
{i_1}_*&: \pi_1(K_0,w_{K_0},A_0)\to\pi_1(K_1,w_{K_1},A_1)\\
{i_2}_*&: \pi_1(K_0,w_{K_0},A_0)\to\pi_1(K_2,w_{K_2},A_2).
\end{align*}

In other words, $\pi_1(L,w_L,B)$ is obtained from the free product of $\pi_1(K_1,w_{K_1},A_1)$ and $\pi_1(K_2,w_{K_2},A_2)$ by adding the relations ${i_1}_*(\alpha)={i_2}_*(\alpha)$ for all $\alpha\in\pi_1(K_0,w_{K_0},A_0)$.
\end{theorem}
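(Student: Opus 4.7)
The plan is to verify the theorem by comparing the combinatorial presentations of both sides directly, with Lemma~\ref{lemma:subtrees} supplying the crucial tree-theoretic identities $B=A_1\cup A_2$ and $A_1\cap A_2=A_0$. Since the amalgamated free product on the right is, by construction, the group with generators those of $\pi_1(K_1,w_{K_1},A_1)$ and $\pi_1(K_2,w_{K_2},A_2)$ together with the relations of both groups and the extra identifications ${i_1}_*(\alpha)={i_2}_*(\alpha)$ for $\alpha\in\pi_1(K_0,w_{K_0},A_0)$, I would produce mutually inverse homomorphisms between this pushout and $\pi_1(L,w_L,B)$.

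First, the induced homomorphisms ${i_1}_*$ and ${i_2}_*$, composed with the inclusion-induced maps into $\pi_1(L,w_L,B)$, agree on the image of $\pi_1(K_0,w_{K_0},A_0)$, so the universal property of the amalgamated free product yields a canonical homomorphism
\[
\Psi:\pi_1(K_1,w_{K_1},A_1)\coprod_{\pi_1(K_0,w_{K_0},A_0)}\pi_1(K_2,w_{K_2},A_2)\longrightarrow\pi_1(L,w_L,B)
\]
that sends a generator $g_{ab}$ originating in $\pi_1(K_k,w_{K_k},A_k)$ to the like-named generator $h_{ab}$ of $\pi_1(L,w_L,B)$. The vertex-ordering clause in the definition of weighted subcomplex guarantees that $a<b$ in $K_k$ implies $a<b$ in $L$, so the correspondence of generators is unambiguous.

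For an inverse, I would apply Lemma~\ref{lemma:genextend} to define a map $\Phi$ on the generators $h_{ab}$ of $\pi_1(L,w_L,B)$ by sending $h_{ab}$ to the class of $g_{ab}\in\pi_1(K_k,w_{K_k},A_k)$ for any $k$ with $ab\in K_k$; because $K_1\cap K_2=K_0$, the amalgamation relation forces the two possible choices to coincide whenever $ab\in K_0$, so the assignment is well defined. I must then check that $\Phi$ respects the defining relations of $\pi_1(L,w_L,B)$. For a tree relation $h_{ab}^{w(ab)}=1$ with $ab\in B$, Lemma~\ref{lemma:subtrees} places $ab\in A_1\cup A_2$, so $g_{ab}^{w(ab)}=1$ already holds in some $\pi_1(K_k,w_{K_k},A_k)$ and hence in the pushout. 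For a 2-simplex relation coming from $avb\in L$ with $a<v<b$, the hypothesis $K_1\cup K_2=L$ puts $avb$ into some $K_k$, where the corresponding relation $g_{ab}^{w(ab)}=g_{av}^{w(av)}g_{vb}^{w(vb)}$ already holds. Lemma~\ref{lemma:genextend} then promotes $\Phi$ to a bona fide homomorphism into the pushout.

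Finally, both $\Psi\circ\Phi$ and $\Phi\circ\Psi$ are the identity on generators by inspection, so they are the identity throughout, and $\Psi$ is an isomorphism. The main obstacle is the well-definedness of $\Phi$: every relation of $\pi_1(L,w_L,B)$ must be witnessed inside at least one of the subcomplex groups. The $2$-simplex case is immediate from $K_1\cup K_2=L$, but the tree relations could in principle be lost on edges of $B$ that straddle the boundary between $K_1$ and $K_2$; it is precisely the identity $B=A_1\cup A_2$ from Lemma~\ref{lemma:subtrees} that rules this out. Once that combinatorial input is in hand, the remainder of the argument is a routine presentation-comparison and poses no genuine difficulty.
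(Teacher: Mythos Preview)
Your proposal is correct and follows essentially the same strategy as the paper: both arguments are presentation comparisons hinging on Lemma~\ref{lemma:subtrees} to ensure that every tree relation of $\pi_1(L,w_L,B)$ is already witnessed in one of the factor groups. The paper carries this out by explicitly partitioning the generators and relations of $\pi_1(L,w_L,B)$ into six disjoint classes according to which piece $K_0$, $K_1\setminus K_0$, $K_2\setminus K_0$ contains the relevant simplex, and then introducing duplicate generators $g'_{ab}$, $g''_{ab}$ for edges in $K_0$ to display the amalgamated free product presentation directly; you instead package the same bookkeeping as a pair of mutually inverse homomorphisms built via the universal property of the pushout and Lemma~\ref{lemma:genextend}, which is a cleaner but equivalent formulation of the same idea.
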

\begin{proof}
The generators $\{g_{ab}\}$ of $\pi_1(L,w_L,B)$ fall into 6 classes (depending on where the 1-simplex $ab$ is):
\begin{enumerate}
\item $ab\in K_0\cap A_0=A_0$,
\item $ab\in K_0\setminus A_0$,
\item $ab\in (K_1\setminus K_0)\cap A_1$,
\item $ab\in K_1\setminus (K_0\cup A_1)=(K_1\setminus K_0)\cap (K_1\setminus A_1)$,
\item $ab\in (K_2\setminus K_0)\cap A_2$,
\item $ab\in K_2\setminus (K_0\cup A_2)=(K_2\setminus K_0)\cap (K_2\setminus A_2)$.
\end{enumerate}

By construction, the above 6 classes of generators are mutually exclusive. By the condition $K_1\cup K_2=L$, the above 6 classes covers all possible cases.

The relations of $\pi_1(L,w_L,B)$ similarly fall into 6 classes:
\begin{enumerate}
\item $g_{ab}^{w(ab)}=1$ if $ab\in A_0$,
\item $g_{ab}^{w(ab)}=g_{av}^{w(av)}g_{vb}^{w(vb)}$ if $a<v<b$ and $avb\in K_0$,
\item $g_{ab}^{w(ab)}=1$ if $ab\in(K_1\setminus K_0)\cap A_1$,
\item $g_{ab}^{w(ab)}=g_{av}^{w(av)}g_{vb}^{w(vb)}$ if $a<v<b$ and $avb\in K_1\setminus K_0$,
\item $g_{ab}^{w(ab)}=1$ if $ab\in (K_2\setminus K_0)\cap A_2$,
\item $g_{ab}^{w(ab)}=g_{av}^{w(av)}g_{vb}^{w(vb)}$ if $a<v<b$ and $avb\in K_2\setminus K_0$.
\end{enumerate}

By Lemma \ref{lemma:subtrees}, $B=A_1\cup A_2$ and $A_1\cap A_2=A_0$. Hence any $ab\in B$ will fall into exactly one class of relations of type (1), (3) or (5). Since $K_1\cup K_2=L$, any $avb\in L$ will fall into exactly one class of relations of type (2), (4) or (6).

The generators and relations of types (1), (2), (3), (4) define $\pi_1(K_1,w_{K_1}, A_1)$ while those of types (1), (2), (5), (6) define $\pi_1(K_2,W_{K_2},A_2)$. Now, for each generator $g_{ab}$ of type (1) or (2), i.e.\ when $ab\in K_0$, we consider two new generators $g'_{ab}$ and $g''_{ab}$.

Then $\pi_1(L,w_L,B)$ is the group generated by $\{g_{ab}\}$, $\{g'_{ab}\}$, $\{g''_{ab}\}$ subject to 9 types of relations as described:
\begin{itemize}
\item The first 4 types correspond to the relations of types (1), (2), (3), (4), but replacing $g_{cd}$ with $g'_{cd}$ whenever $cd\in K_0$.
\item The next 4 types correspond to the relations of types (1), (2), (5), (6), but replacing $g_{cd}$ with $g''_{cd}$ whenever $cd\in K_0$.
\item The final 9th type of relation is $g'_{ab}=g''_{ab}$ for all $ab\in K_0$.
\end{itemize}

The group with the generators $\{g_{ab}\}$, $\{g'_{ab}\}$, $\{g''_{ab}\}$ and the first 8 types of relations is the free product of $\pi_1(K_1,w_{K_1}, A_1)$ and $\pi_1(K_2,w_{K_2},A_2)$. The 9th type of relation corresponds to the amalgamation that identifies two elements of $\pi_1(K_1,w_{K_1},A_1)$ and $\pi_1(K_2,w_{K_2},A_2)$ that arise from the same element of $\pi_1(K_0,w_{K_0},A_0)$.
\end{proof}

We can also restate the weighted van Kampen theorem in terms of a commutative pushout diagram (cf.\ \cite[p.~426]{munkres2000topology}).

\begin{theorem}
\label{thm:catvankampen}
Let $(L,w_L,B)$, $(K_0,w_{K_0},A_0)$, $(K_1,w_{K_1},A_1)$, $(K_2,w_{K_2},A_2)$ be path-connected WSCs satisfying the same conditions as in Theorem \ref{thm:vankampen}.

Then the following diagram commutes:

\begin{center}
\begin{tikzcd}
\pi_1(L,w_L,B)
& & \\
&\pi_1(K_1,w_{K_1},A_1)\displaystyle\coprod_{\pi_1(K_0,w_{K_0},A_0)}\pi_1(K_2,w_{K_2},A_2)
\arrow{ul}{k}
& \pi_1(K_2,w_{K_2},A_2)\arrow{l}{\phi_2}\arrow[bend right]{ull}{{j_2}_*}\\
&\pi_1(K_1,w_{K_1},A_1)\arrow{u}{\phi_1}\arrow[bend left]{uul}{{j_1}_*} & \pi_1(K_0,w_{K_0},A_0)\arrow{l}{{i_1}_*}\arrow{u}{{i_2}_*}
\end{tikzcd}
\end{center}

In the diagram, ${i_1}_*$, ${i_2}_*$, ${j_1}_*$, ${j_2}_*$ are homomorphisms induced by the respective inclusion maps. The maps $\phi_1$, $\phi_2$ are inclusions into the free product followed by projection onto the quotient. The morphism $k$ is an isomorphism.
\qed
\end{theorem}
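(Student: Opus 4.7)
The plan is to deduce this categorical reformulation directly from Theorem \ref{thm:vankampen} together with the universal property of the free product with amalgamation (i.e.\ the pushout in the category of groups). The key observation is that Theorem \ref{thm:vankampen} already does the heavy lifting: it identifies $\pi_1(L,w_L,B)$ with the free product with amalgamation as an abstract group. What remains is essentially bookkeeping to upgrade this identification to the statement that a specific map $k$ out of the pushout is an isomorphism and that the displayed diagram commutes.

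First I would verify that the inner square commutes, i.e.\ that ${j_1}_*\circ {i_1}_* = {j_2}_*\circ {i_2}_*$ as homomorphisms $\pi_1(K_0,w_{K_0},A_0)\to\pi_1(L,w_L,B)$. This is immediate on generators: each composition sends $g_{ab}$ (for $ab\in K_0$) to $g_{ab}\in\pi_1(L,w_L,B)$, since both $j_1\circ i_1$ and $j_2\circ i_2$ equal the inclusion $K_0\hookrightarrow L$. By the universal property of the pushout in the category of groups, there exists a unique homomorphism
\begin{equation*}
k:\pi_1(K_1,w_{K_1},A_1)\coprod_{\pi_1(K_0,w_{K_0},A_0)}\pi_1(K_2,w_{K_2},A_2)\longrightarrow \pi_1(L,w_L,B)
\end{equation*}
satisfying $k\circ\phi_1={j_1}_*$ and $k\circ\phi_2={j_2}_*$. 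This is precisely the diagonal arrow labelled $k$ in the diagram, and its existence together with these identities is exactly the commutativity claim.

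Next I would show that $k$ is an isomorphism. On generators, $k$ sends (the image under $\phi_i$ of) a generator $g_{ab}$ of $\pi_1(K_i,w_{K_i},A_i)$ to $g_{ab}\in\pi_1(L,w_L,B)$. Comparing this with the presentation of the free product with amalgamation that is explicitly written out in the proof of Theorem \ref{thm:vankampen} --- namely the generating set $\{g_{ab}\}$ partitioned into six classes, together with the nine families of relations obtained after introducing the duplicated generators $g'_{ab},g''_{ab}$ for $ab\in K_0$ and then identifying them --- we see that $k$ is precisely the isomorphism of Theorem \ref{thm:vankampen}. Thus $k$ is bijective on a presentation and hence an isomorphism of groups.

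The only genuine subtlety, and the place I would be most careful, is the compatibility of the induced homomorphisms ${j_1}_*,{j_2}_*,{i_1}_*,{i_2}_*$ with the splitting of generators and relations used in the proof of Theorem \ref{thm:vankampen}; in particular one must check that the inclusion-induced maps are well-defined (this is exactly the content of the earlier proposition on inclusion homomorphisms, which relies on $A_i\subseteq B$, the vertex-order compatibility, and $w_{K_i}=w_L|_{K_i}$, all of which are part of the definition of a weighted subcomplex). Once this is in hand, both the commutativity of the diagram and the isomorphism claim for $k$ reduce to the explicit presentation analysis already carried out in Theorem \ref{thm:vankampen}, so no further calculation is needed.
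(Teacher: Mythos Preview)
Your proposal is correct and is precisely the argument the paper leaves implicit: the paper gives no proof beyond the \qed, treating Theorem~\ref{thm:catvankampen} as an immediate categorical restatement of Theorem~\ref{thm:vankampen}. Your two-step outline---first obtaining $k$ from the universal property of the pushout via the commuting inner square, then identifying $k$ with the explicit presentation isomorphism built in the proof of Theorem~\ref{thm:vankampen}---is exactly how one would unpack that \qed.
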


We illustrate the weighted van Kampen theorem in the following example.

\begin{eg}
\begin{figure}[htbp]
\begin{subfigure}{1\textwidth}
\centering
\begin{tikzpicture}[scale=0.8]
\fill[fill=lightgray]
(2,0)  
-- (3,-1.7)
-- (4,0);
\filldraw 
(0,0) circle (2pt) node[below] {$v_1$}
(2,0) circle (2pt) node[below] {} %v_2  
(1.8,0) circle (0pt) node[below] {$v_2$}
(1,1.7) circle (2pt) node[above] {$v_0$}
(3,-1.7) circle (2pt) node[below] {$v_3$}
(4,0) circle (2pt) node[below] {} %v_4
(4.2,0) circle (0pt) node[below] {$v_4$}
(5,1.7) circle (2pt) node[above] {$v_5$}
(6,0) circle (2pt) node[below] {$v_6$};
\draw[line width=2pt] (0,0)--(2,0);
\draw (2,0)--(1,1.7);
\draw[line width=2pt] (1,1.7)--(0,0);
\draw[line width=2pt] (2,0)--(3,-1.7);
\draw[line width=2pt] (3,-1.7)--(4,0);
\draw (2,0)--(4,0);
\draw[line width=2pt] (4,0)--(6,0);
\draw[line width=2pt] (5,1.7)--(6,0);
\draw (4,0)--(5,1.7);
\end{tikzpicture}
\caption{$(L,w_L,B)$}
\end{subfigure}
\begin{subfigure}{0.49\textwidth}
\centering
\begin{tikzpicture}[scale=0.8]
\fill[fill=lightgray]
(2,0)  
-- (3,-1.7)
-- (4,0);
\filldraw 
(0,0) circle (2pt) node[below] {$v_1$}
(2,0) circle (2pt) node[below] {} %v_2  
(1.8,0) circle (0pt) node[below] {$v_2$}
(1,1.7) circle (2pt) node[above] {$v_0$}
(3,-1.7) circle (2pt) node[below] {$v_3$}
(4,0) circle (2pt) node[below] {} %v_4
(4.2,0) circle (0pt) node[below] {$v_4$};
\draw[line width=2pt] (0,0)--(2,0);
\draw (2,0)--(1,1.7);
\draw[line width=2pt] (1,1.7)--(0,0);
\draw[line width=2pt] (2,0)--(3,-1.7);
\draw[line width=2pt] (3,-1.7)--(4,0);
\draw (2,0)--(4,0);
\end{tikzpicture}
\caption{$(K_1,w_{K_1},A_1)$}
\end{subfigure}
\begin{subfigure}{0.49\textwidth}
\centering
\begin{tikzpicture}[scale=0.8]
\fill[fill=lightgray]
(2,0)  
-- (3,-1.7)
-- (4,0);
\filldraw 
(2,0) circle (2pt) node[below] {} %v_2  
(1.8,0) circle (0pt) node[below] {$v_2$}
(3,-1.7) circle (2pt) node[below] {$v_3$}
(4,0) circle (2pt) node[below] {} %v_4
(4.2,0) circle (0pt) node[below] {$v_4$}
(5,1.7) circle (2pt) node[above] {$v_5$}
(6,0) circle (2pt) node[below] {$v_6$};
\draw[line width=2pt] (2,0)--(3,-1.7);
\draw[line width=2pt] (3,-1.7)--(4,0);
\draw (2,0)--(4,0);
\draw[line width=2pt] (4,0)--(6,0);
\draw[line width=2pt] (5,1.7)--(6,0);
\draw (4,0)--(5,1.7);\end{tikzpicture}
\caption{$(K_2,w_{K_2},A_2)$}
\end{subfigure}
\begin{subfigure}{0.49\textwidth}
\centering
\begin{tikzpicture}[scale=0.8]
\fill[fill=lightgray]
(2,0)  
-- (3,-1.7)
-- (4,0);
\filldraw 
(2,0) circle (2pt) node[below] {} %v_2  
(1.8,0) circle (0pt) node[below] {$v_2$}
(3,-1.7) circle (2pt) node[below] {$v_3$}
(4,0) circle (2pt) node[below] {} %v_4
(4.2,0) circle (0pt) node[below] {$v_4$};
\draw[line width=2pt] (2,0)--(3,-1.7);
\draw[line width=2pt] (3,-1.7)--(4,0);
\draw (2,0)--(4,0);
\end{tikzpicture}
\caption{$(K_0,w_{K_0},A_0)$}
\end{subfigure}
\caption{The 4 WSCs with their respective maximal trees $B, A_1, A_2, A_0$ marked in bold. The 4 WSCs are chosen such that they satisfy the conditions of the weighted van Kampen theorem (Theorem \ref{thm:vankampen}).}
\label{fig:vankampen}
\end{figure}
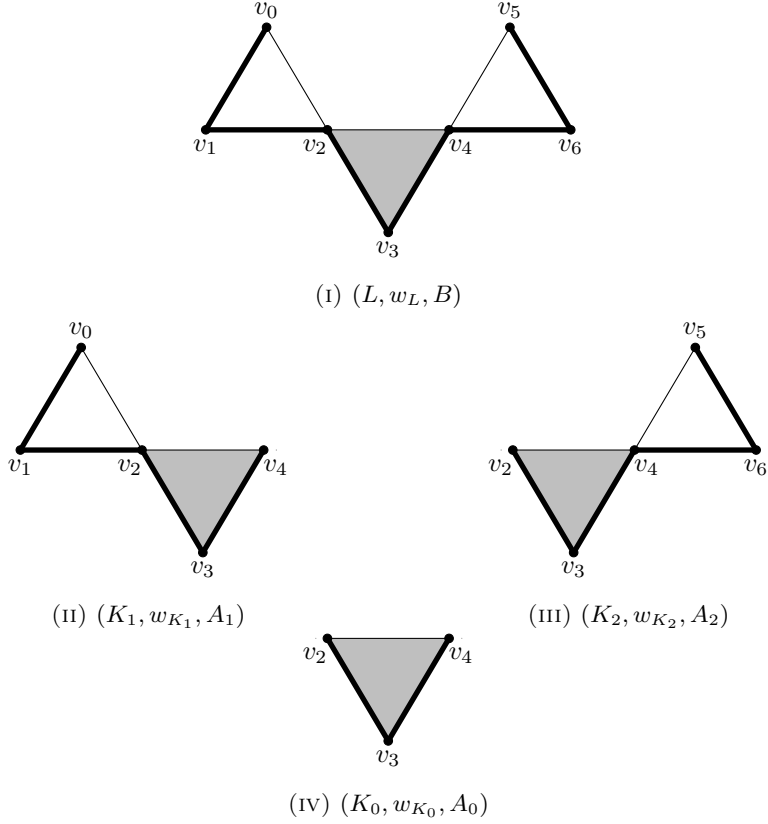

Consider the path-connected WSCs in Figure \ref{fig:vankampen}, with their respective maximal trees marked in bold. The 4 WSCs are chosen such that they satisfy the conditions of the weighted van Kampen theorem (Theorem \ref{thm:vankampen}). We write $w_{ij}$ for $w_L(v_iv_j)$ and $g_{ij}$ for $g_{v_i}g_{v_j}$.

Calculating $\pi_1(L,w_L,B)$ from the definition (Definition \ref{defn:relations}), we have that the generators of $\pi_1(L,w_L,B)$ are

\begin{equation*}
g_{01}, g_{02}, g_{12}, g_{23}, g_{24}, g_{34}, g_{45}, g_{46}, g_{56}
\end{equation*}

with defining relations given by:
\begin{enumerate}
\item $g_{01}^{w_{01}}=g_{12}^{w_{12}}=g_{23}^{w_{23}}=g_{34}^{w_{34}}=g_{46}^{w_{46}}=g_{56}^{w_{56}}=1$,
\item $g_{24}^{w_{24}}=g_{23}^{w_{23}}g_{34}^{w_{34}}=1$.
\end{enumerate}

Hence,
\begin{equation}
\label{eq:vankampendef}
\pi_1(L,w_L,B)\cong\Z*\Z*\Z/w_{01}*\Z/w_{12}*\Z/w_{23}*\Z/w_{34}*\Z/w_{46}*\Z/w_{56}*\Z/w_{24}.
\end{equation}

Note that in particular, if all the weights $w_{01}, w_{12}, w_{23}, w_{34}, w_{46}, w_{56}, w_{24}$ are equal to $\pm 1$, we have $\pi_1(L,w_L,B)\cong\Z*\Z$. This corresponds to the usual fundamental group $\pi_1(L)\cong\Z*\Z$ since $L$ is homotopy equivalent to a wedge of two circles.

Similarly, by definition we can compute that 
\begin{equation*}
\pi_1(K_1,w_{K_1},A_1)=\langle g_{01}, g_{02}, g_{12}, g_{23}, g_{24}, g_{34}\mid g_{01}^{w_{01}}=g_{12}^{w_{12}}=g_{23}^{w_{23}}=g_{34}^{w_{34}}=1, g_{24}^{w_{24}}=1\rangle
\end{equation*}

and
\begin{equation*}
\pi_1(K_2,w_{K_2},A_2)=\langle g_{23}, g_{24}, g_{34}, g_{45}, g_{46}, g_{56}\mid g_{23}^{w_{23}}=g_{34}^{w_{34}}=g_{46}^{w_{46}}=g_{56}^{w_{56}}=1, g_{24}^{w_{24}}=1\rangle.
\end{equation*}

By the weighted van Kampen theorem (Theorem \ref{thm:vankampen}), we have that
\begin{equation}
\label{eq:vankampeneg}
\begin{split}
\pi_1(L,w_L,B)&=\pi_1(K_1,w_{K_1},A_1)\coprod_{\pi_1(K_0,w_{K_0},A_0)}\pi_1(K_2,w_{K_2},A_2)\\
&=\langle g_{01}, g_{02}, g_{12}, g_{23}, g_{24}, g_{34}, g_{45}, g_{46}, g_{56}\mid\\
&\qquad g_{01}^{w_{01}}=g_{12}^{w_{12}}=g_{23}^{w_{23}}=g_{34}^{w_{34}}=1, g_{24}^{w_{24}}=1, g_{46}^{w_{46}}=g_{56}^{w_{56}}=1\rangle\\
&\cong\Z*\Z*\Z/w_{01}*\Z/w_{12}*\Z/w_{23}*\Z/w_{34}*\Z/w_{24}*\Z/w_{46}*\Z/w_{56}.
\end{split}
\end{equation}

Note that \eqref{eq:vankampeneg} agrees with our previous computation of $\pi_1(L,w_L,B)$ using the definition \eqref{eq:vankampendef}.
\end{eg}
\section{Weighted Homology Group and Abelianization of Weighted Fundamental Group}
In the paper \cite{Dawson1990}, Robert Dawson introduced the weighted homology of simplicial complexes. Subsequently, S.\ Ren, C.\ Wu and J.\ Wu \cite{ren2018weighted,ren2017further} generalized the definition of weighted homology, and studied the theory of weighted persistent homology. In \cite{wu2018weighted}, C.\ Wu, S.\ Ren, J.\ Wu and K.\ Xia studied the weighted (co)homology of simplicial complexes by considering the $\phi$-weighted  (co)boundary operator. The approach in \cite{wu2018weighted} generalizes the previous definitions of weighted homology in \cite{Dawson1990,ren2018weighted,ren2017further}. Weighted (co)homology has various applications in data analysis \cite{ren2018weighted}, biochemistry \cite[p.~18]{wu2018weighted} and network motifs \cite[p.~22]{wu2018weighted}.

As an application of our definition of the weighted fundamental group, we show that the weighted fundamental group is able to distinguish between weighted simplicial complexes with the same first weighted homology group. Hence, it can be said that the weighted fundamental group contains more information than the first weighted homology group.

\begin{eg}
\label{eg:sameweightedhom}
Consider the simplicial complex $K$ in Figure \ref{fig:circle}. By the computations in Example \ref{eg:circleK}, the weighted fundamental group is $\pi_1(K,w,A)\cong\Z*(\Z/w_{01})*(\Z/w_{12})$. We can observe that different choices of weights $w_{01}$, $w_{12}$ lead to different weighted fundamental groups.

Let $\partial_1^w$ be the 1st weighted boundary map (see \cite[Definition~4.4]{ren2018weighted}). Suppose $w_{01}$, $w_{02}$, $w_{12}$ are nonzero. We may define the weights $w([v_0])$, $w([v_1])$, $w([v_2])$ to be 1. Then, we have
\begin{align*}
\partial_1^w([v_0,v_1])&=w_{01}[v_1]-w_{01}[v_0],\\
\partial_1^w([v_1,v_2])&=w_{12}[v_2]-w_{12}[v_1],\\
\partial_1^w([v_0,v_2])&=w_{02}[v_2]-w_{02}[v_0].
\end{align*}

We can verify that $\ker\partial_1^w\cong\Z$, generated by $w_{02}w_{12}[v_0,v_1]+w_{01}w_{02}[v_1,v_2]-w_{01}w_{12}[v_0,v_2]$. Since there is no 2-simplex in $K$, hence the weighted homology is $H_1(K,w)=\ker\partial_1^w\cong\Z$.

Consider 2 different WSCs $(K,w,A)$ and $(K,w',A)$ where $w_{01}=2$, $w_{02}=1$, $w_{12}=4$, while $w'_{01}=w'_{02}=w'_{12}=1$. We have $\pi_1(K,w,A)\cong\Z*(\Z/2)*(\Z/4)$ but $\pi_1(K,w',A)\cong\Z$.

Meanwhile, both $H_1(K,w)$ and $H_1(K,w')$ are isomorphic to $\Z$. (The difference in weighted homology in this case is reflected in the 0-th homology group, where $H_0(K,w)\cong\Z\oplus\Z_2$ while $H_0(K,w')\cong\Z$.)
\end{eg}

\begin{remark}
\label{remark:poincarefail}
A classical theorem of Poincar\'{e} (see \cite[p.~166]{Hatcher2002}) states that if $X$ is a path-connected space, then $H_1(X;\Z)$ is the abelianization of $\pi_1(X)$. From Example \ref{eg:sameweightedhom}, we see that Poincar\'{e}'s Theorem fails in the weighted case, since the abelianization of $\pi_1(K,w,A)$ is isomorphic to $\Z\oplus\Z/2\oplus\Z/4$ but $H_1(K,w)\cong\Z$.
\end{remark}

\subsection{Abelianization of Weighted Fundamental Group}
\label{subsec:abelian}
In Remark \ref{remark:poincarefail}, it is shown that Poincar\'{e}'s theorem fails in the weighted case. Hence, it is natural to study the abelianization of the weighted fundamental group.

\begin{defn}[{\cite[p.~58]{bergman2015invitation}}]
For a group $G$, we denote its \emph{abelianization} by
\begin{equation*}
\Ab(G):=G/[G,G],
\end{equation*}
where $[G,G]$ is the commutator subgroup.
\end{defn}

We state the following basic lemma, which will be used subsequently.

\begin{lemma}
Let $G_1$ and $G_2$ be groups. Then we have
\begin{equation*}
\Ab(G_1*G_2)\cong\Ab(G_1)\oplus\Ab(G_2).
\end{equation*}
\qed
\end{lemma}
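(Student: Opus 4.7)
The plan is to prove the isomorphism via the universal properties of the free product (coproduct in the category of groups) and the direct sum (coproduct in the category of abelian groups), together with the universal property of abelianization, namely that any homomorphism from a group $G$ to an abelian group $A$ factors uniquely through the quotient map $G \to \Ab(G)$.

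First I would construct a homomorphism $\Phi: \Ab(G_1) \oplus \Ab(G_2) \to \Ab(G_1 * G_2)$. Let $\iota_k: G_k \to G_1 * G_2$ for $k=1,2$ be the canonical inclusions, and let $\pi: G_1 * G_2 \to \Ab(G_1*G_2)$ be the abelianization quotient. Then $\pi \circ \iota_k : G_k \to \Ab(G_1*G_2)$ lands in an abelian group, so by the universal property of abelianization it factors uniquely through a homomorphism $\alpha_k: \Ab(G_k) \to \Ab(G_1*G_2)$. The universal property of the direct sum of abelian groups then produces the desired $\Phi$ with $\Phi|_{\Ab(G_k)} = \alpha_k$.

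Next I would construct a map in the other direction, $\Psi: \Ab(G_1*G_2) \to \Ab(G_1) \oplus \Ab(G_2)$. Let $q_k: G_k \to \Ab(G_k)$ be the abelianization maps and let $j_k: \Ab(G_k) \to \Ab(G_1)\oplus\Ab(G_2)$ be the coproduct inclusions. The universal property of the free product applied to the homomorphisms $j_k \circ q_k : G_k \to \Ab(G_1)\oplus\Ab(G_2)$ yields a homomorphism $G_1 * G_2 \to \Ab(G_1)\oplus\Ab(G_2)$; since the target is abelian, this factors uniquely through $\pi$ to define $\Psi$.

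Finally I would verify that $\Phi$ and $\Psi$ are mutually inverse. By construction, on each generating subgroup $\Ab(G_k)$ the composition $\Psi \circ \Phi$ agrees with the identity, and on the generating images $\pi(\iota_k(G_k))$ of $\Ab(G_1*G_2)$ the composition $\Phi \circ \Psi$ also agrees with the identity; since these images generate the respective targets, both compositions are the identity. The only mild obstacle is bookkeeping: one must keep the three distinct universal properties (free product, direct sum, and abelianization) straight and track the uniqueness clauses carefully to avoid circular invocations. No computation beyond this chase is needed.
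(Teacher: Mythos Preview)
Your proposal is correct. In the paper this lemma is stated as a standard fact and marked with a \qed\ (no proof given in the final text); the commented-out proof in the source likewise proceeds via the universal properties of abelianization and of the direct sum, so your approach is essentially the same as the authors' intended argument, and in fact your version is tidier in that you explicitly construct both $\Phi$ and $\Psi$ rather than leaving the maps unnamed.
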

\begin{comment}
\begin{proof}
Let $f: G_1*G_2\to\Ab(G_1)\oplus\Ab(G_2)$ be a group homomorphism. Let $\pi:G_1*G_2\to\Ab(G_1*G_2)$ be the quotient map. By the universal property of abelianization, there exists a unique homomorphism $g: \Ab(G_1*G_2)\to\Ab(G_1)\oplus\Ab(G_2)$ such that $g\circ\pi=f$.

Let $j_i:\Ab(G_i)\to\Ab(G_1)\oplus\Ab(G_2)$ be the canonical injections, for $i=1,2$. Let $\phi_i: \Ab(G_i)\to\Ab(G_1*G_2)$ be group homomorphisms. By the universal property of direct sum, there exists a unique homomorphism $h:\Ab(G_1)\oplus\Ab(G_2)\to\Ab(G_1*G_2)$ such that $h\circ j_i=\phi_i$ for all $i$.

Let $\id: \Ab(G_1*G_2)\to\Ab(G_1*G_2)$ be the identity map. Since $\Ab(\Ab(G_1*G_2))=\Ab(G_1*G_2))$, by the universal property of abelianization, we conclude that $h\circ g=\id$. Similarly, since $\Ab(\Ab(G_1)\oplus\Ab(G_2))=\Ab(G_1)\oplus\Ab(G_2)$, by the universal property of abelianization we can conclude that $g\circ h=\id$. Therefore $\Ab(G_1*G_2)\cong\Ab(G_1)\oplus\Ab(G_2)$.

The above arguments can be summarized in the following commutative diagrams:
\begin{center}
\begin{tikzcd}
& \Ab(G_i)\arrow{r}{\phi_i}\arrow{d}{j_i} &\Ab(G_1*G_2)\\
G_1*G_2\arrow{r}{f}\arrow{d}{\pi} &\Ab(G_1)\oplus\Ab(G_2)\arrow{ur}{h} &\\
\Ab(G_1*G_2)\arrow{ur}{g} & &\Ab(G_1*G_2)\arrow{ll}{\id}\arrow{uu}{\id}
\end{tikzcd}
\end{center}

\begin{center}
\begin{tikzcd}
& & \Ab(G_1)\oplus\Ab(G_2)\\
& \Ab(G_1*G_2)\arrow{ur}{g} &\\
\Ab(G_1)\oplus\Ab(G_2)\arrow{ur}{h}\arrow{rr}{\id} & &\Ab(G_1)\oplus\Ab(G_2)\arrow{uu}{\id}
\end{tikzcd}
\end{center}
\end{proof}
\end{comment}

Since the free product is associative, by induction we can obtain the following corollary.

\begin{cor}
\label{cor:abfreeprods}
Let $G_1,\dots G_n$ be groups. Then
\begin{equation*}
\Ab(G_1*\dots*G_n)\cong\bigoplus_{i=1}^n \Ab(G_i).
\end{equation*}
\qed
\end{cor}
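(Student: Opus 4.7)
The plan is a direct induction on $n$, using the preceding binary lemma as the engine and associativity of the free product to reduce the $n$-fold case to the binary case.

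First I would dispatch the base cases. For $n=1$ the statement is the tautology $\Ab(G_1) \cong \Ab(G_1)$, and for $n=2$ it is precisely the lemma that immediately precedes the corollary. For the inductive step, assume the result for $n-1$ groups, namely that $\Ab(G_1 * \cdots * G_{n-1}) \cong \bigoplus_{i=1}^{n-1} \Ab(G_i)$. Since the free product is associative (as a coproduct in the category of groups), there is a canonical isomorphism
\[
G_1 * \cdots * G_n \;\cong\; (G_1 * \cdots * G_{n-1}) * G_n.
\]
Applying the preceding binary lemma with the two factors $G_1 * \cdots * G_{n-1}$ and $G_n$ gives
\[
\Ab(G_1 * \cdots * G_n) \;\cong\; \Ab(G_1 * \cdots * G_{n-1}) \oplus \Ab(G_n).
\]
Substituting the inductive hypothesis into the first summand yields
\[
\Ab(G_1 * \cdots * G_n) \;\cong\; \Bigl(\bigoplus_{i=1}^{n-1} \Ab(G_i)\Bigr) \oplus \Ab(G_n) \;\cong\; \bigoplus_{i=1}^{n} \Ab(G_i),
\]
which is the claim.

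There is no real obstacle here: the only subtlety worth remarking on is that associativity of the free product must be invoked to rebracket $G_1 * \cdots * G_n$ so that the binary lemma applies, and that the direct sum on the abelianization side is also associative (and in fact commutative), so the rebracketing on the right-hand side is similarly unproblematic. Everything else is bookkeeping, which is presumably why the corollary is left with a \texttt{\textbackslash qed} in the statement.
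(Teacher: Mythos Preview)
Your proof is correct and matches the paper's approach exactly: the paper introduces the corollary with the sentence ``Since the free product is associative, by induction we can obtain the following corollary,'' which is precisely the argument you wrote out in full.
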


\begin{prop}
\label{prop:abexactly2}
Let $(K,w,A)$ be a weighted simplicial complex such that for each 2-simplex $avb\in K$, exactly 2 of the 1-simplices $ab$, $av$, $vb$ lie in the maximal tree $A$.

Then,
\begin{equation*}
\Ab(\pi_1(K,w,A))\cong\bigoplus_{ab\in A}\Z/w(ab)\oplus\bigoplus_{\substack{ab\in K\setminus A \\ \text{and $ab$ is a face of}\\ \text{some 2-simplex of $K$}}}\Z/w(ab)\oplus\bigoplus_{\substack{ab\in K\setminus A \\ \text{and $ab$ is not a face of}\\ \text{any 2-simplex of $K$}}}\Z.
\end{equation*}
\end{prop}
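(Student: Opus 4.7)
The plan is to derive this as a direct consequence of Theorem \ref{thm:exactly2formula} combined with Corollary \ref{cor:abfreeprods}. First I would invoke Theorem \ref{thm:exactly2formula} to rewrite $\pi_1(K,w,A)$ as a free product of three families of cyclic groups, indexed respectively by the 1-simplices in $A$, the 1-simplices in $K\setminus A$ that are faces of some 2-simplex, and the 1-simplices in $K\setminus A$ that are not faces of any 2-simplex. Applying the abelianization functor $\Ab(-)$ to both sides then reduces the problem to computing the abelianization of a free product of cyclic groups.

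Next I would apply Corollary \ref{cor:abfreeprods} to move the abelianization inside the free product, converting it into a direct sum. Since each factor $\Z/w(ab)$ and $\Z$ is already abelian, its own abelianization is itself, so the direct sum collapses to precisely the claimed expression
\begin{equation*}
\bigoplus_{ab\in A}\Z/w(ab)\oplus\bigoplus_{\substack{ab\in K\setminus A \\ \text{and $ab$ is a face of}\\ \text{some 2-simplex of $K$}}}\Z/w(ab)\oplus\bigoplus_{\substack{ab\in K\setminus A \\ \text{and $ab$ is not a face of}\\ \text{any 2-simplex of $K$}}}\Z.
\end{equation*}

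The only point requiring mild care is that Corollary \ref{cor:abfreeprods} is stated for finitely many factors, whereas $K$ may in principle have infinitely many 1-simplices; if so, I would note that the corollary extends verbatim to arbitrary index sets by the universal property of the free product and of the abelianization, since $\Ab(-)$ is a left adjoint and thus commutes with arbitrary colimits (coproducts in the category of groups being free products, and in abelian groups being direct sums). Other than that observation, there is no genuine obstacle: the proposition is essentially a formal corollary of the two earlier results.
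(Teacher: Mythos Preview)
Your proposal is correct and follows exactly the paper's own approach: the paper's proof is the single sentence ``The proof is obtained by applying Corollary \ref{cor:abfreeprods} to Theorem \ref{thm:exactly2formula}.'' Your additional remark about extending Corollary \ref{cor:abfreeprods} to arbitrary index sets via the left-adjointness of $\Ab(-)$ is a nice touch that the paper does not bother to mention.
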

\begin{proof}
The proof is obtained by applying Corollary \ref{cor:abfreeprods} to Theorem \ref{thm:exactly2formula}.
\end{proof}

\begin{prop}
\label{prop:abgraph}
Let $(K,w,A)$ be a weighted graph. Then
\begin{equation*}
\Ab(\pi_1(K,w,A))\cong H_1(K;\Z)\oplus \bigoplus_{ab\in A}\Z/w(ab),
\end{equation*}
where $ab$ denotes a 1-simplex.
\end{prop}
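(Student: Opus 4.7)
The plan is to chain together three results already established in the paper together with the classical Hurewicz/Poincar\'{e} theorem in dimension one. The key structural fact is Corollary \ref{cor:pi1weightedgraph}, which decomposes the weighted fundamental group of a weighted graph as
\begin{equation*}
\pi_1(K,w,A)\cong\pi_1(K)*\prod^*_{ab\in A}\Z/w(ab).
\end{equation*}
Abelianization turns free products into direct sums, so the strategy is simply to apply $\Ab(-)$ to this isomorphism and simplify each factor.

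First, I would invoke Corollary \ref{cor:abfreeprods} (extended from the two-factor lemma to an arbitrary, possibly infinite, index set by associativity and the universal property of abelianization) to obtain
\begin{equation*}
\Ab(\pi_1(K,w,A))\cong\Ab(\pi_1(K))\oplus\bigoplus_{ab\in A}\Ab(\Z/w(ab)).
\end{equation*}
Since each $\Z/w(ab)$ is already abelian, $\Ab(\Z/w(ab))=\Z/w(ab)$, so the second summand is exactly the one appearing in the statement.

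For the first summand, I would appeal to the classical fact (Poincar\'{e}'s/Hurewicz theorem in dimension one) that for a path-connected space $X$, $\Ab(\pi_1(X))\cong H_1(X;\Z)$. Applying this to the path-connected simplicial complex $K$ yields $\Ab(\pi_1(K))\cong H_1(K;\Z)$, and substituting gives the claimed formula. Note that Remark \ref{remark:poincarefail} only says that Poincar\'{e}'s theorem fails for the \emph{weighted} fundamental group; here we are applying it to the ordinary $\pi_1(K)$, which is perfectly valid.

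There is no real obstacle here; the whole argument is a bookkeeping exercise once Corollary \ref{cor:pi1weightedgraph} is in hand. The only mild subtlety is ensuring that Corollary \ref{cor:abfreeprods}, stated for finitely many factors, applies when $A$ is infinite, but this is immediate from the universal property of the abelianization functor, which commutes with arbitrary coproducts (free products in \textbf{Grp} become direct sums in \textbf{Ab}).
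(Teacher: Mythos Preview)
Your proposal is correct and follows exactly the paper's own proof: apply Corollary~\ref{cor:abfreeprods} to the decomposition of Corollary~\ref{cor:pi1weightedgraph}, then use Poincar\'{e}'s theorem $\Ab(\pi_1(K))\cong H_1(K;\Z)$ and the fact that each $\Z/w(ab)$ is already abelian. Your additional remarks about the infinite-index case and the scope of Remark~\ref{remark:poincarefail} are valid clarifications but not needed for the argument as stated.
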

\begin{proof}
We apply Corollary \ref{cor:abfreeprods} and Poincar\'{e}'s theorem $\Ab(\pi_1(K))\cong H_1(K;\Z)$ to Corollary \ref{cor:pi1weightedgraph}. The result then follows since $\Ab(\Z/w(ab))\cong\Z/w(ab)$.
\end{proof}

We also calculate an example that is not covered by Propositions \ref{prop:abexactly2} and \ref{prop:abgraph}.

\begin{eg}
Consider Example \ref{eg:specialK}, where $K$ is the simplicial complex shown in Figure \ref{fig:wedge2circle}. We can compute that
\begin{equation*}
\begin{split}
\Ab(\pi_1(K,w,A))&\cong\bigoplus_{i=1}^4\Z/2\oplus\Ab(\langle a,b,a^{-1}bc\mid a^{-2}b^2c^2=1\rangle)\\
&\cong\bigoplus_{i=1}^5\Z/2\oplus\Z\oplus\Z.
\end{split}
\end{equation*}
\end{eg}

\section{Lower Central Series}
\label{sec:lowercs}
In \cite{gaglione1975factor}, Anthony M.\ Gaglione studied the factor groups of the lower central series for groups $G$ that are free products of finitely generated abelian groups. In the context of Theorem \ref{thm:exactly2formula} and weighted graphs (Proposition \ref{prop:productcyclic}), we recall that $\pi_1(K,w,A)$ is a free product of cyclic groups. Hence we may apply Gaglione's results to study the lower central series for $G=\pi_1(K,w,A)$ for these two special cases. In general, it is considered very difficult to describe the factor groups of the lower central series of an arbitrary group (cf.\ \cite{waldinger1970lower}).

\begin{defn}[{\cite[p.~160]{hall2018theory}}]
Let $G$ be a group. We define the following subgroups inductively:
\begin{equation*}
\begin{split}
\gamma_1(G)&=G,\\
\gamma_2(G)&=[G,G],\\
\gamma_{k+1}(G)&=[\gamma_k(G),G].
\end{split}
\end{equation*}

The series
\begin{equation*}
G=\gamma_1(G)\trianglerighteq \gamma_2(G)\trianglerighteq \gamma_3(G)\trianglerighteq \dots
\end{equation*}
is called the \emph{lower central series} of $G$.
\end{defn}

\begin{remark}
The abelianization of $G$, discussed in Section \ref{subsec:abelian}, is the quotient group $\gamma_1(G)/\gamma_2(G)$.
\end{remark}

For the rest of this section, we let $(K,w,A)$ be a weighted simplicial complex satisfying the conditions of Theorem \ref{thm:exactly2formula} (which includes the case of weighted graphs). To be precise, we let $(K,w,A)$ be a weighted simplicial complex such that for each 2-simplex $avb\in K$, exactly 2 of the 1-simplices $ab$, $av$, $vb$ lie in the maximal tree $A$. Then $G=\pi_1(K,w,A)$ is a free product of cyclic groups. In particular, $G$ is a free product of finitely generated abelian groups.

Thus, following the notation of \cite[p.~173]{gaglione1975factor}, we may write

\begin{equation}
\label{eq:g1gs}
G=G(1)*G(2)*\dots*G(s)
\end{equation}

where

\begin{equation}
\label{eq:cgen}
G(i)=\langle c_{n_{i-1}+1}\rangle\times\langle c_{n_{i-1}+2}\rangle\times\dots\times\langle c_{n_i}\rangle
\end{equation}

such that any generator $c_k$ $(0=n_0<k\leq n_s=r)$ has either infinite order or order a power of a prime $p(k)$.

\begin{defn}[{\cite[p.~230]{waldinger1970lower},\cite[p.~166]{hall2018theory}}]
The \emph{basic commutators} of dimension one are the free generators of the free group $F=\langle c_1,c_2,\dots,c_r\rangle$. We order the generators by $c_1<c_2<\dots<c_r$. We denote the \emph{dimension} of an element $a\in F$ by $D(a)$.

We then define basic commutators of dimension $n$ inductively as follows. The basic commutators of dimension $n$ are $c_m=[c_i,c_j]$ where $c_i$ and $c_j$ are basic commutators such that
\begin{enumerate}
\item $D(c_i)+D(c_j)=n$,
\item $c_i>c_j$, and
\item if $c_i=[c_s,c_t]$, then $c_j\geq c_t$.
\end{enumerate}
\end{defn}

In \cite[p.~174]{gaglione1975factor}, basic commutators are further classified into 4 classes, namely $G$-simple, $F$-simple, $I$-simple and $J$-simple. We will omit their precise definitions in this paper as they are not used in the subsequent discussions.

The following is the main theorem for this section.

\begin{theorem}[{cf.\ \cite[p.~175]{gaglione1975factor}}]
\label{thm:gaglione}
Let $G=\pi_1(K,w,A)$, where $\pi_1(K,w,A)$ satisfies the conditions in Theorem \ref{thm:exactly2formula}.

Then 
\begin{equation*}
\gamma_n(G)/\gamma_{n+1}(G)\cong\bar{\mathscr{G}}_{1n}\times\bar{\mathscr{G}}_{2n}\times\dots\times\bar{\mathscr{G}}_{qn}\times\bar{\mathscr{G}}_{\infty n},
\end{equation*}
where the $\bar{\mathscr{G}}_{jn}$ are as defined in \cite[p.~174]{gaglione1975factor}.

In particular, each $\bar{\mathscr{G}}_{jn}$ is a finite abelian $p_j$-group for some prime $p_j$. Also, each $\bar{\mathscr{G}}_{\infty n}$ is a free abelian group of rank $R_n^\infty$.

The formula for $R_n^\infty$ is as follows. Write $G$ in the form $G(1)*\dots*G(s)$, where each $G(i)$ is of the form shown in Equation \eqref{eq:cgen}. Let $m_0=0$. Suppose that $m_i$ of the generators $c_1,c_2,\dots,c_{n_i}$ ($i=1,2,\dots,s$) have infinite order (see \eqref{eq:g1gs}, \eqref{eq:cgen}). 

Let
\begin{equation*}
z=\sum_{k=1}^\infty x^k=\frac{1}{1-x}-1.
\end{equation*}

Let
\begin{align*}
U_\infty(x)&=1+(1+z)^{m_s}\left\{(s-1)-\sum_{j=1}^s(1+z)^{-(m_j-m_{j-1})}\right\},\\
\alpha_{\infty n}&=-\frac{1}{n!}\left\{\frac{d^n}{dx^n}\log[1-U_\infty(x)]|_{x=0}\right\}.
\end{align*}

Then
\begin{equation*}
R_n^\infty=\begin{cases}
m_s &\text{for $n=1$,}\\
\frac{1}{n}\displaystyle\sum_{\substack{k\mid n\\ k>1}}[\mu(\frac{n}{k})][k\alpha_{\infty k}] &\text{for $n>1$},
\end{cases}
\end{equation*}
where $\mu(i)$ is the M\"{o}bius function \cite[p.~179]{hall2018theory}.
\end{theorem}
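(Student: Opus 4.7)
The plan is to reduce the theorem to a direct application of Gaglione's classification of lower central series factor groups for free products of finitely generated abelian groups \cite{gaglione1975factor}. By Theorem \ref{thm:exactly2formula}, the hypothesis on the maximal tree $A$ already guarantees that $G=\pi_1(K,w,A)$ decomposes as a free product of cyclic groups, some infinite and some finite. The main task is therefore to recast this decomposition so that the hypotheses of Gaglione's theorem---namely that $G=G(1)*\cdots*G(s)$ with each $G(i)$ a direct product of cyclic groups whose generators are either of infinite order or of prime-power order---are satisfied on the nose.

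First I would apply the Chinese Remainder Theorem to each finite cyclic factor: if $w(ab)=\pm p_1^{a_1}\cdots p_k^{a_k}$ is the prime factorization, then $\Z/w(ab)\cong \Z/p_1^{a_1}\times\cdots\times\Z/p_k^{a_k}$, a direct product of cyclic groups of prime-power order. Each such factor becomes one of the $G(i)$; each infinite cyclic factor $\Z$ becomes a $G(i)$ with a single generator of infinite order. Labelling the free factors $G(1),\dots,G(s)$ and enumerating the generators $c_1,\dots,c_r$ block-by-block as in \eqref{eq:cgen}, one obtains the precise form of $G$ required by \cite{gaglione1975factor}, with the prime $p(k)$ determined by the corresponding prime-power cyclic summand.

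Next I would invoke Gaglione's theorem verbatim to conclude $\gamma_n(G)/\gamma_{n+1}(G)\cong \bar{\mathscr{G}}_{1n}\times\cdots\times\bar{\mathscr{G}}_{qn}\times\bar{\mathscr{G}}_{\infty n}$, together with the structural statements that each $\bar{\mathscr{G}}_{jn}$ is a finite abelian $p_j$-group and $\bar{\mathscr{G}}_{\infty n}$ is free abelian. To pin down its rank $R_n^\infty$, I would compute $m_i$ as the number of infinite-order generators among $c_1,\dots,c_{n_i}$ by traversing the $G(i)$ in order (incrementing $m_i$ by one at each infinite cyclic factor, leaving it unchanged at a finite one), substitute these $m_i$ into $U_\infty(x)$, extract the coefficients $\alpha_{\infty n}$ from the Taylor expansion of $-\log[1-U_\infty(x)]$ about $x=0$, and apply Möbius inversion to reach the stated closed form.

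The only potential obstacle is essentially cosmetic: one must check that the bookkeeping of prime-power decompositions and the total ordering $c_1<c_2<\cdots<c_r$ is consistent with Gaglione's conventions, and verify that the infinite-factor rank formula in \cite[Theorem~3]{gaglione1975factor} specialises correctly when (as in our case) every $G(i)$ is already presented as a direct product of cyclic groups. Since Gaglione's framework is formulated for an arbitrary free product of finitely generated abelian groups, no new group-theoretic input is required beyond Theorem \ref{thm:exactly2formula} and the structure theorem for finite cyclic groups; the proof is thus a direct transcription rather than an independent argument.
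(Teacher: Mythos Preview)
Your proposal is correct and takes essentially the same approach as the paper: both observe via Theorem~\ref{thm:exactly2formula} that $G$ is a free product of cyclic groups, hence a free product of finitely generated abelian groups, and then invoke Gaglione's results directly. Your explicit use of the Chinese Remainder Theorem to recast finite cyclic factors in prime-power form is a reasonable elaboration of what the paper leaves implicit in the setup preceding the theorem (where the decomposition into generators of infinite or prime-power order is assumed without comment); the paper's own proof is terser still, merely noting that Gaglione's Theorems~2.1 and~2.2 apply and then verifying the $n=1$ case via the abelianization.
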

\begin{proof}
When $(K,w,A)$ satisfies the conditions of Theorem 2.8, $G=\pi_1(K,w,A)$ is a free product of finitely generated abelian groups. Hence Theorem 2.1 and Theorem 2.2 in \cite[p.~175]{gaglione1975factor} hold. When $n=1$, $\gamma_n(G)/\gamma_{n+1}(G)=\Ab(G)$. With the help of Corollary \ref{cor:abfreeprods} and Equation \eqref{eq:cgen}, we see that the rank $R_n^\infty$ is equal to $m_s$, the total number of generators with infinite order.
\end{proof}

\begin{prop}
\label{prop:r1graph}
Let $(K,w,A)$ be a weighted graph. Let $G=\pi_1(K,w,A)$ be written in the form of \eqref{eq:g1gs}, \eqref{eq:cgen}. Let $m_i$ be the number of generators among $c_1, c_2,\dots,c_{n_i}$ that have infinite order.

Then
\begin{equation*}
R_1^\infty=m_s=\rank(\pi_1(K))+\left|\{ab\in A\mid w(ab)=0\}\right|.
\end{equation*}
We also note that $R_1^\infty$ is the rank of (the torsion-free part of) $\Ab(G)$.
\end{prop}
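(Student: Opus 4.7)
The plan is to combine Corollary \ref{cor:pi1weightedgraph} with the $n=1$ case of Theorem \ref{thm:gaglione}. By Corollary \ref{cor:pi1weightedgraph}, we have
\[
G=\pi_1(K,w,A)\cong \pi_1(K)*\prod^*_{ab\in A}\Z/w(ab),
\]
and $\pi_1(K)$ is itself a free group whose rank we denote $r:=\rank(\pi_1(K))$, so $\pi_1(K)\cong \Z^{*r}$. Splicing these two decompositions gives $G$ as a free product of cyclic groups, which puts it directly into the form \eqref{eq:g1gs}, \eqref{eq:cgen} (each $G(i)$ can be taken to be a single cyclic factor, further split into primary components via the Chinese Remainder Theorem where needed so that each generator has infinite order or prime power order).

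Next I would count, one free factor at a time, how many of the generators $c_k$ in \eqref{eq:cgen} have infinite order. Each of the $r$ copies of $\Z$ coming from $\pi_1(K)$ contributes exactly one infinite-order generator. For an edge $ab\in A$, the factor $\Z/w(ab)$ is interpreted in the usual way: if $w(ab)=0$ then $\Z/w(ab)\cong \Z$ and contributes one infinite-order generator; if $w(ab)\neq 0$ then $\Z/w(ab)$ is a nontrivial finite cyclic group and its primary decomposition contributes only generators of prime-power (finite) order. Summing up,
\[
m_s \;=\; r + \bigl|\{ab\in A : w(ab)=0\}\bigr| \;=\; \rank(\pi_1(K))+\bigl|\{ab\in A\mid w(ab)=0\}\bigr|.
\]

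Finally, Theorem \ref{thm:gaglione} at $n=1$ yields $R_1^\infty=m_s$, which is the displayed formula. For the parenthetical remark I would invoke Proposition \ref{prop:abgraph}: since $\gamma_1(G)/\gamma_2(G)=\Ab(G)\cong H_1(K;\Z)\oplus\bigoplus_{ab\in A}\Z/w(ab)$, the torsion-free rank of $\Ab(G)$ equals $\rank(H_1(K;\Z))$ plus the number of $ab\in A$ with $w(ab)=0$; for a graph $H_1(K;\Z)$ is free abelian of rank $\rank(\pi_1(K))$, so this agrees with $R_1^\infty$ as expected.

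There is essentially no obstacle here beyond bookkeeping: the only mildly delicate point is to match our decomposition to the hypothesis of Theorem \ref{thm:gaglione}, i.e. to confirm that the cyclic factors can be reorganized into prime-power-order or infinite-order generators, and to remember the convention $\Z/0\Z \cong \Z$ so that zero-weighted tree edges are counted on the infinite-order side.
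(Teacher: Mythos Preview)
Your proposal is correct and follows essentially the same approach as the paper: both invoke Corollary~\ref{cor:pi1weightedgraph} together with the fact that $\pi_1(K)$ is free (since $K$ is a graph) to count the infinite-order generators as $\rank(\pi_1(K))+|\{ab\in A\mid w(ab)=0\}|$, and then apply the $n=1$ case of Theorem~\ref{thm:gaglione} to conclude $R_1^\infty=m_s$. The only minor difference is in the parenthetical remark: the paper observes directly that $R_1^\infty$ is the rank of $\bar{\mathscr{G}}_{\infty 1}$, the free abelian part of $\gamma_1(G)/\gamma_2(G)=\Ab(G)$, whereas you verify this by computing $\Ab(G)$ explicitly via Proposition~\ref{prop:abgraph}; both are fine.
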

\begin{proof}
By Corollary \ref{cor:pi1weightedgraph} and the fact that $\pi_1(K)$ is a free group since $K$ is a graph, we have that $m_s$ is the sum of the rank of $\pi_1(K)$ and the number of edges $ab$ in the maximal tree $A$ satisfying $w(ab)=0$. The result then follows from Theorem \ref{thm:gaglione}.

We also note that $R_1^\infty$ is the rank of $\bar{\mathscr{G}}_{\infty 1}$, which is the torsion-free part of $\gamma_1(G)/\gamma_2(G)=\Ab(G)$.
\end{proof}

The following lemma is a useful result regarding $\gamma_2(G)=[G,G]$.

\begin{lemma}
\label{lemma:normalclosure}
Let $G$ be a finitely generated group with generators $c_1,\dots,c_n$. Let $N$ be the normal closure of the set $S=\{[c_i,c_j]\mid 1\leq i<j\leq n\}$.

Then, $\gamma_2(G)=N$.
\end{lemma}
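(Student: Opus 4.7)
The plan is to prove the two inclusions $N\subseteq \gamma_2(G)$ and $\gamma_2(G)\subseteq N$ separately, with the second being the substantive direction.

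For the inclusion $N\subseteq \gamma_2(G)$, I would argue as follows. By definition each generator $[c_i,c_j]$ of $S$ lies in $\gamma_2(G)=[G,G]$, so $S\subseteq \gamma_2(G)$. Since $\gamma_2(G)$ is a normal subgroup of $G$ (it is a characteristic subgroup, and in particular normal), and $N$ is the smallest normal subgroup containing $S$, we conclude $N\subseteq\gamma_2(G)$.

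For the reverse inclusion $\gamma_2(G)\subseteq N$, the idea is to show that the quotient $G/N$ is abelian, which forces $[G,G]\subseteq N$ by the universal property of the commutator subgroup. Let $\pi\colon G\to G/N$ denote the quotient map and set $\bar{c_i}=\pi(c_i)$. For each pair $i<j$ we have $[c_i,c_j]\in S\subseteq N$, so $[\bar{c_i},\bar{c_j}]=1$ in $G/N$, i.e.\ $\bar{c_i}$ and $\bar{c_j}$ commute; the same holds for $i>j$ since $[c_i,c_j]=[c_j,c_i]^{-1}$, and trivially for $i=j$. Since $\{c_1,\dots,c_n\}$ generates $G$, the set $\{\bar{c_1},\dots,\bar{c_n}\}$ generates $G/N$, and because these generators pairwise commute, $G/N$ is abelian. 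Consequently $\gamma_2(G)=[G,G]\subseteq\ker\pi=N$.

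Combining both inclusions yields $\gamma_2(G)=N$. The only mild subtlety is the observation that a group generated by pairwise-commuting elements is abelian, which is immediate once one notes that the set of elements commuting with a fixed generator is a subgroup, and then induct; no real obstacle arises in this argument.
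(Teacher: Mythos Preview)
Your proof is correct and follows essentially the same approach as the paper: show $N\subseteq\gamma_2(G)$ because $\gamma_2(G)$ is a normal subgroup containing $S$, and show $\gamma_2(G)\subseteq N$ by observing that the generators commute in $G/N$, making $G/N$ abelian. Your version simply spells out a few more details (the $i>j$ case, why pairwise-commuting generators yield an abelian group) than the paper's terse two-line argument.
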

\begin{proof}
Since $\gamma_2(G)$ is a normal subgroup containing $S$, hence $N\leq\gamma_2(G)$. On the other hand, in $G/N$ the generators commute hence $G/N$ is abelian. Thus $N$ must contain the commutator subgroup, that is, $\gamma_2(G)=[G,G]\leq N$.
\end{proof}

We show an example by computing the lower central series by hand and showing that it agrees with the theorems presented previously.

\begin{eg}
Consider the weighted graph $(K,w,A)$ in Figure \ref{fig:circle}, Example \ref{eg:circleK}. Let $w_{01}=0$ and $w_{12}=2$. Then

\begin{equation*}
\begin{split}
G&=\pi_1(K,w,A)\\
&\cong\Z*\Z*\Z/2\\
&\cong\langle c_1\rangle*\langle c_2\rangle*\langle c_3\mid c_3^2=1\rangle.
\end{split}
\end{equation*}

By Proposition \ref{prop:abgraph} or Corollary \ref{cor:abfreeprods}, we have $\Ab(G)\cong\Z\oplus\Z\oplus\Z/2$.

We see that 

\begin{equation*}
R_1^\infty=m_3=2
\end{equation*}

is equal to 

\begin{equation*}
\rank(\pi_1(K))+\left|\{ab\in A\mid w(ab)=0\}\right|=1+1=2
\end{equation*}

as predicted by Proposition \ref{prop:r1graph}.

For $n=2$, we can use Theorem \ref{thm:gaglione} to calculate that

\begin{equation*}
\begin{split}
U_\infty(x)&=1+\frac{2x-1}{(1-x)^2}\\
a_{\infty 2}&=1\\
R_2^\infty&=\frac{1}{2}[\mu(\frac{2}{2})][2a_{\infty 2}]=1.
\end{split}
\end{equation*}

We can also compute $\gamma_2(G)/\gamma_3(G)$ directly. By Lemma \ref{lemma:normalclosure}, $\gamma_2(G)$ is the normal closure of $\{[c_1,c_2],[c_1,c_3],[c_2,c_3]\}$. We note that in the quotient group $\gamma_2(G)/\gamma_3(G)$, all conjugates of a fixed commutator $[c_i,c_j]$ are in the same coset. For instance, for any $g\in G$, we have $g^{-1}[c_1,c_2]g\gamma_3(G)=[c_1,c_2]\gamma_3(G)$ since $[c_1,c_2]^{-1}g^{-1}[c_1,c_2]g=[[c_1,c_2],g]\in\gamma_3(G)$.

Therefore, 
\begin{equation*}
\gamma_2(G)/\gamma_3(G)=\langle [c_1,c_2]\gamma_3(G), [c_1,c_3]\gamma_3(G), [c_2,c_3]\gamma_3(G)\rangle.
\end{equation*}

The first generator $[c_1,c_2]\gamma_3(G)$ has infinite order. The other two generators have order 2 due to the relation $c_3^2=1$. For instance, $[c_1,c_3]^2=[[c_3,c_1],c_3]\in\gamma_3(G)$.

Therefore, 
\begin{equation*}
\gamma_2(G)/\gamma_3(G)\cong\Z\oplus\Z/2\oplus\Z/2.
\end{equation*}
We see that $R_2^\infty=1$, which agrees with the calculation using Theorem \ref{thm:gaglione}.
\end{eg}

\section{Tracking Location of ``Birth'' and ``Death'' of Cycles}
\label{sec:track}
In persistent homology \cite{Zomorodian2005,edelsbrunner2012persistent}, studying the ``birth'' and ``death'' of a homology class is of great importance. In practice, the point cloud data could be divided into several regions, and each cycle could lie in any of the regions. For instance, in the study of the brain using persistent homology \cite{bendich2016persistent,Lee2012}, it could be possible that the points are divided into regions based on different locations of the brain (e.g.\ left brain, right brain, etc.). However, just from looking at the persistent homology groups or the barcodes it is not possible to tell the location of the cycle which is born or has died. 

By using suitable weights, the weighted fundamental group itself can contain some information that enables us to tell the location of the cycles which are born or has died. We illustrate this using the following example.

\begin{eg}
Consider the three WSCs in Figure \ref{fig:3filtration}, which form a filtration. The points in Figure \ref{fig:3filtration} are divided into two regions -- left and right. We may let the weights of edges on the left region be 2 and the weights of edges on the right region be 3. To be precise, we let 
\begin{align*}
w(v_0v_1)&=w(v_0v_2)=w(v_1v_2)=2,\\
w(v_2v_3)&=w(v_2v_4)=w(v_3v_4)=3.
\end{align*}

We can calculate that
\begin{align*}
\pi_1(K_0,w,A_0)&=\Z*\Z/2*\Z/2,\\
\pi_1(K_1,w,A_1)&=\Z*\Z*\Z/2*\Z/2*\Z/3*\Z/3,\\
\pi_1(K_2,w,A_2)&=\Z*\Z/2*\Z/2*\Z/2*\Z/3*\Z/3.
\end{align*}

The presence of the free factor $\Z/3*\Z/3$ and an extra copy of the free factor $\Z$ in $\pi_1(K_1,w,A_1)$ tells us that in $K_1$, a cycle is \emph{born} in the right region. Meanwhile, the disappearance of a free factor $\Z$ and the appearance of an extra copy of the free factor $\Z/2$ in $\pi_1(K_2,w,A_2)$ allows us to conclude that a cycle has \emph{died} in the left region.

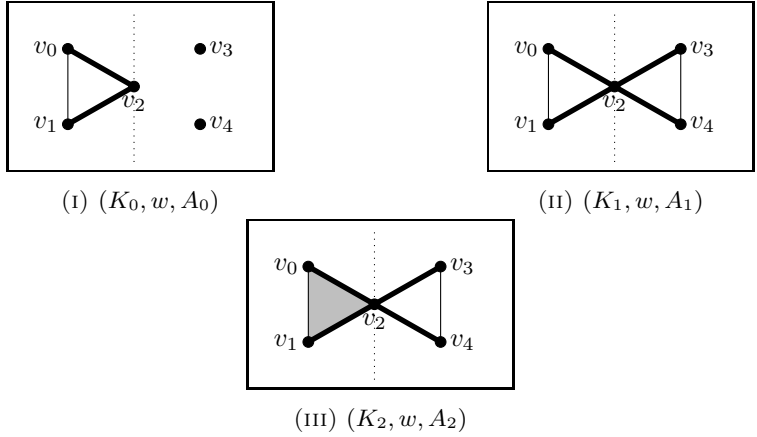
\begin{figure}[htbp]
\begin{subfigure}{0.49\textwidth}
\centering
\fbox{
\begin{minipage}{0.49\textwidth}
\begin{tikzpicture}[scale=1]
\filldraw 
(0,0) circle (2pt) node[below] {$v_2$}
(0.87,0.5) circle (2pt) node[right] {$v_3$}
(0.87,-0.5) circle (2pt) node[right] {$v_4$}
(-0.87,0.5) circle (2pt) node[left] {$v_0$}
(-0.87,-0.5) circle (2pt) node[left] {$v_1$};
\draw[line width=2pt] (0,0)--(-0.87,0.5);
\draw[line width=2pt] (0,0)--(-0.87,-0.5);
\draw (-0.87,0.5)--(-0.87,-0.5);
\draw[dotted] (0,-1)--(0,1);
\end{tikzpicture}
\end{minipage}
}
\caption{$(K_0,w,A_0)$}
\end{subfigure}
\begin{subfigure}{0.49\textwidth}
\centering
\fbox{
\begin{minipage}{0.49\textwidth}
\begin{tikzpicture}[scale=1]
\filldraw 
(0,0) circle (2pt) node[below] {$v_2$}
(0.87,0.5) circle (2pt) node[right] {$v_3$}
(0.87,-0.5) circle (2pt) node[right] {$v_4$}
(-0.87,0.5) circle (2pt) node[left] {$v_0$}
(-0.87,-0.5) circle (2pt) node[left] {$v_1$};
\draw[line width=2pt] (0,0)--(-0.87,0.5);
\draw[line width=2pt] (0,0)--(-0.87,-0.5);
\draw (-0.87,0.5)--(-0.87,-0.5);
\draw[line width=2pt] (0,0)--(0.87,0.5);
\draw[line width=2pt] (0,0)--(0.87,-0.5);
\draw (0.87,0.5)--(0.87,-0.5);
\draw[dotted] (0,-1)--(0,1);
\end{tikzpicture}
\end{minipage}
}
\caption{$(K_1,w,A_1)$}
\end{subfigure}
\begin{subfigure}{0.49\textwidth}
\centering
\fbox{
\begin{minipage}{0.49\textwidth}
\begin{tikzpicture}[scale=1]
\fill[fill=lightgray]
(0,0)  
-- (-0.87,0.5)
-- (-0.87,-0.5);
\filldraw 
(0,0) circle (2pt) node[below] {$v_2$}
(0.87,0.5) circle (2pt) node[right] {$v_3$}
(0.87,-0.5) circle (2pt) node[right] {$v_4$}
(-0.87,0.5) circle (2pt) node[left] {$v_0$}
(-0.87,-0.5) circle (2pt) node[left] {$v_1$};
\draw[line width=2pt] (0,0)--(-0.87,0.5);
\draw[line width=2pt] (0,0)--(-0.87,-0.5);
\draw (-0.87,0.5)--(-0.87,-0.5);
\draw[line width=2pt] (0,0)--(0.87,0.5);
\draw[line width=2pt] (0,0)--(0.87,-0.5);
\draw (0.87,0.5)--(0.87,-0.5);
\draw[dotted] (0,-1)--(0,1);
\end{tikzpicture}
\end{minipage}
}
\caption{$(K_2,w,A_2)$}
\end{subfigure}
\caption{The three WSCs form a filtration $K_0\subseteq K_1\subseteq K_2$. Their respective maximal trees $A_0$, $A_1$, $A_2$ are marked in bold.}
\label{fig:3filtration}
\end{figure}
\end{eg}

We remark that weighted persistent homology defined in \cite{ren2018weighted} can also tell when a cycle (containing a special point) is formed or has disappeared. In view of this, the main advantage of using weighted fundamental groups is its potential connection to deeper aspects of algebraic topology. For instance, it is possible for weighted fundamental groups to be non-abelian (Examples \ref{eg:circleK} and \ref{eg:circleL}) while weighted (persistent) homology groups in \cite{ren2018weighted} are always abelian.

Another solution to the problem of tracking the location of cycles is to compute not only the ranks but also the generators of the persistent homology barcodes, as done in the seminal paper \cite{busaryev2010tracking} by O.\ Busaryev, T.\ Dey and Y.\ Wang. In their paper, the goal is to track a chosen essential generating cycle via reordering simplices in the filtration. A motivating application includes scanning objects where the shape is represented by a discrete sample, and inferring geometrical and topological properties from such data. The paper also includes an effective algorithm that is tested on three point cloud models with simplicial complexes of variable sizes.

\section{Application to the study of chemical molecules}
\label{sec:chem}
We outline some applications of the weighted fundamental group to the study of molecules.
\subsection{Weights to distinguish between types of bonds.}
\label{sec:typesofbonds}
In chemistry there are different types of bonds between atoms. For example, single or double bonds between carbon atoms are common in molecules.

Fullerenes are an important class of molecules. Fullerenes are also considered as nanomaterials, which is a significant area of research in materials science \cite{valiev2002materials,gottschalk2009modeled,martin1994nanomaterials}. Remarkably, a number of chemical properties of a fullerene can be derived from its graph structure \cite{schwerdtfeger2015topology}.

A classical example of a fullerene is $\text{C}_{60}$. The fullerene $\text{C}_{60}$ has two bond lengths \cite[p.~364]{katz2006fullerene}. The bonds between two hexagons can be considered ``double bonds'' and are shorter than the bonds between a hexagon and a pentagon. 

The weighted fundamental group can be used to study molecules by assigning suitable weights to different types of bonds. For instance, single and double bonds can be assigned different weights. We illustrate the idea by the following example.

\begin{eg}
Consider the two WSCs in Figure \ref{fig:tworings}, which represents the pentagon and hexagon rings in $\text{C}_{60}$. By setting appropriate weights, the weighted fundamental group is able to distinguish between the two ring structures. For instance, we may let $w_K(\sigma)=1$ for all edges $\sigma\in K$. Then $\pi_1(K,w_K,A)=\Z$.

On the other hand, we may let $w_L(v_5v_6)=w(v_9v_{10})=w(v_7v_8)=2$ (these edges represent the double bonds), while $w_L(\tau)=1$ for all other edges $\tau\in L$. Then $\pi_1(L,w_L,B)=\Z*\Z/2*\Z/2*\Z/2$.
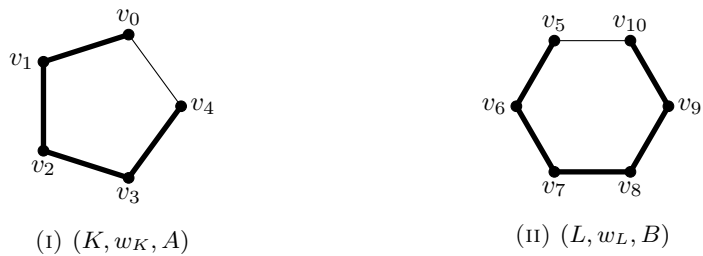
\begin{figure}[htbp]
\begin{subfigure}{0.49\textwidth}
\centering
\begin{tikzpicture}[scale=1]
\filldraw 
(1,0) circle (2pt) node[right] {$v_4$}
(0.31,0.95) circle (2pt) node[above] {$v_0$}
(-0.81,0.59) circle (2pt) node[left] {$v_1$}
(-0.81,-0.59) circle (2pt) node[below] {$v_2$}
(0.31,-0.95) circle (2pt) node[below] {$v_3$};
\draw (1,0)--(0.31,0.95);
\draw[line width=2pt] (0.31,0.95)--(-0.81,0.59);
\draw[line width=2pt] (-0.81,0.59)--(-0.81,-0.59);
\draw[line width=2pt] (-0.81,-0.59)--(0.31,-0.95);
\draw[line width=2pt] (0.31,-0.95)--(1,0);
\end{tikzpicture}
\caption{$(K,w_{K},A)$}
\end{subfigure}
\begin{subfigure}{0.49\textwidth}
\centering
\begin{tikzpicture}[scale=1]
\filldraw 
(1,0) circle (2pt) node[right] {$v_9$} 
(0.5,0.87) circle (2pt) node[above] {$v_{10}$}
(-0.5,0.87) circle (2pt) node[above] {$v_5$}
(-1,0) circle (2pt) node[left] {$v_6$}
(-0.5,-0.87) circle (2pt) node[below] {$v_7$}
(0.5,-0.87) circle (2pt) node[below] {$v_8$};
\draw[line width=2pt] (1,0)--(0.5,0.87);
\draw (0.5,0.87)--(-0.5,0.87);
\draw[line width=2pt] (-0.5,0.87)--(-1,0);
\draw[line width=2pt] (-1,0)--(-0.5,-0.87);
\draw[line width=2pt] (-0.5,-0.87)--(0.5,-0.87);
\draw[line width=2pt] (0.5,-0.87)--(1,0);
\end{tikzpicture}
\caption{$(L,w_{L},B)$}
\end{subfigure}
\caption{The 2 WSCs representing the pentagon and hexagon rings of $\text{C}_{60}$ respectively. Their respective maximal trees $A$, $B$ are marked in bold.}
\label{fig:tworings}
\end{figure}
\end{eg}

\subsection{Distinguishing between Hamiltonian Paths}
\label{sec:hamiltonian}
In graph theory, a Hamiltonian path is a path that visits each vertex of the graph exactly once. Hamiltonian paths have some important applications in biology. In 2018, Twarock, Leonov and Stockley \cite{twarock2018hamiltonian} used Hamiltonian path analysis (HPA) to study viral genomes. Hamiltonian paths on fullerenes have also been studied \cite{maruvsic2007hamilton}.

A Hamiltonian path can be viewed as a maximal tree by considering the union of all the edges and vertices in the Hamiltonian path: It is path-connected and does not contain cycles (otherwise a vertex would be visited more than once) and therefore a tree. Since it contains all vertices, it is a maximal tree.

The weighted fundamental group is an algebraic invariant that can detect different maximal trees when the edges are suitably weighted (see Remark \ref{remark:choicemaxtree}). For instance, we may give each edge a different integer weight. Hence, the weighted fundamental group can be used to distinguish between different Hamiltonian paths.

\subsection{Alternative approaches}
The problems in Sections \ref{sec:typesofbonds} and \ref{sec:hamiltonian} have potential to be analyzed by computing the generators of persistent homology barcodes, as done in \cite{busaryev2010tracking}. For distinguishing between types of bonds, firstly information regarding each bond type can be stored in a separate database. Then, tracking a chosen generating cycle in \cite{busaryev2010tracking} can correspond to detecting a chosen pentagon or hexagon ring of a molecule. For Hamiltonian paths, tracking a generator (containing particular edges) in the style of \cite{busaryev2010tracking} can correspond to distinguishing between different Hamiltonian paths that contain or do not contain those particular edges.

\subsection{Algorithmic complexity}
The maximal tree $A$ in a simplicial complex $K$ can be given by Kruskal's algorithm which has a time complexity of $O(E\log E)$, where $E$ is the number of edges in $K$ \cite{sorensen2005algorithm}. The presentation of the weighted fundamental group is then given by listing the generators and the respective relations given in Definition \ref{defn:relations}. We remark that due to the Novikov-Boone theorem \cite{baumslag1959some}, there exists a finite presentation of a group for which there is no algorithm that, given two words $a$, $b$, determines whether $a$ and $b$ are the same element in the group. In view of the above (negative solution to the word problem for groups), we remark that it can be difficult to analyze or simplify presentations of the weighted fundamental group. 

For comparison, the time complexity for persistent homology is cubic time (in the size of the complex) \cite{wagner2012efficient,boissonnat2014computing}.

\section*{Acknowledgements}
We wish to thank the referees most warmly for numerous suggestions that have improved the exposition of this paper.
%\section{Alternative Definitions of Weighted Fundamental Group}

%%%%%Main Text %%%%%
\bibliographystyle{amsplain}
%\nocite{*}
\bibliography{jabref9}

\end{document}